\newcommand\End{\mathrm{End}}
\newcommand\Ric{\mathrm{Ric}} \newcommand\Hess{\mathrm{Hess}}
 \newcommand\II{\mathrm{II}}
\newcommand\Sect{\mathrm{Sect}}
 \newcommand\E{\mathbb{E}}
 \newcommand\R{\mathbb{R}}
\renewcommand\P{\mathbb{P}}
\newcommand\vd{\mathrm{d}}
\newcommand\e{\operatorname{e}}
\newcommand\id{\mathrm{id}}
\newcommand\blank{{\kern.8pt\displaystyle\cdot\kern.8pt}}
\newcommand\blankdown{{\!\displaystyle.\kern.8pt}}
\def\r{\right}
\def\l{\left}
\journalname{Analysis and Mathematical Physics,}
\begin{document}

\title{Uniform gradient estimates on manifolds with a boundary \\ and applications}

\titlerunning{Uniform gradient estimates on manifolds with a boundary}   

\author{Li-Juan Cheng         \and
        Anton Thalmaier	      \and
        James Thompson
}

\authorrunning{L.-J. Cheng, A. Thalmaier and J. Thompson} 

\institute{Li-Juan  Cheng \at
              Mathematics Research Unit, 
              University of Luxembourg, Campus Belval, 
              4364 Esch-sur-Alzette, 
              Luxembourg\\
              \email{lijuan.cheng@uni.lu}
\and
           Anton Thalmaier  \at
           Mathematics Research Unit, 
              University of Luxembourg, Campus Belval, 
              4364 Esch-sur-Alzette, 
              Luxembourg\\
              \email{anton.thalmaier@uni.lu}
\and
           James Thompson \at Mathematics Research Unit, 
              University of Luxembourg, Campus Belval, 
              4364 Esch-sur-Alzette, 
              Luxembourg\\
                \email{james.thompson@uni.lu}
}


\date{Date: March 23, 2018}

\maketitle

\begin{abstract}
We revisit the problem of obtaining uniform gradient estimates for Dirichlet and Neumann heat semigroups on Riemannian manifolds with boundary. As applications, we obtain isoperimetric inequalities, using Ledoux's argument, and uniform quantitative gradient estimates, firstly for $C^2_b$ functions with boundary conditions and then for the unit spectral projection operators of Dirichlet and Neumann Laplacians.
 \keywords{elliptic operator \and gradient estimate \and Ricci curvature \and uniform bounds}
 \subclass{Primary: MSC~58J65 \and MSC~58J35; Secondary: MSC~58J05}
\end{abstract}

\section{Introduction}

Suppose $M$ is a complete and connected Riemannian manifold of dimension $d$. Denote by $\rho$ the Riemannian distance function, by $\nabla$ the Levi-Civita connection, by $\Delta$ the Laplace-Beltrami operator and for a smooth vector field $Z$ consider the elliptic operator $L := \Delta +Z$.

If $M$ is without a boundary, then it is easy to show that if the Bakry-\'Emery Ricci tensor ${\Ric}^Z:=\Ric-\nabla Z$ is bounded below then the semigroup of the $\frac{1}{2}L$-diffusion satisfies a uniform gradient estimate on the type displayed below in Theorem \ref{global-esti}. See also \cite{BL96,Elworthy}. If $M$ has a boundary, estimates of this type can be extended to the Neumann semigroup (the semigoup of the reflecting $\frac{1}{2}L$-diffusion process), as in \cite{Wbook2}, or to the Dirichlet semigroup (the semigroup of the diffusion killed on the boundary), as in \cite{Wang04}. In the latter, Wang used coupling methods to obtain the estimate. Isoperimetric inequalities were derived as a consequence. The purpose of the present paper is to revisit these problems, considering uniform gradient estimates for both the Neumann and Dirichlet semigroups on manifolds with boundary, and to present applications.

Our gradient estimates for the Dirichlet and Neumann semigroups are given below by Theorems \ref{thm-gradient-Dirichlet} and \ref{Gradient-Neumann}, respectively. For the Neumann case, Wang \cite{Wang05,Wbook2} established a derivative estimate, as mentioned above, using a Bismut formula and conformal change of metric. Theorem \ref{Gradient-Neumann} is a consequence of a strengthening of Wang's result, which we presented in our recent article \cite{Ch-Tha-Tho:2017b}. To prove Theorem \ref{thm-gradient-Dirichlet}, we build upon the recent work of Arnaudon, Thalmaier and Wang \cite{Ar-Tha-Wa:2017}, in which explicit uniform two-sided gradient estimates for Dirichlet eigenfunctions were proved using a probabilistic method, also based on Bismut's formula.

Uniform gradient estimates for the semigroup can, of course, be applied to give estimates for the eigenfunctions, as explained at the end of Subsection \ref{ss:gradnobound}. This leads to an application of our main results, namely the derivation of uniform estimates for the gradient of unit spectral projection operators. The study of these objects, which are constructed from eigenfunctions, has a long history \cite{SS07,Sogge02,Sogge88,Xu11,Xu09}. In \cite{Xu09,Xu11}, Xu's method used local gradient estimates twice; for points far from the boundary and for points close the boundary. This approach can be simplified; see Subsection \ref{ss:spec} for our main results in this direction.

In some sense, this paper is a continuation of \cite{Ch-Tho-Tha:2017a}. In that paper, we proved quantitative local $C^1$-estimates for $C^2$ functions on manifolds without a boundary, with extensions given to differential forms. That developed the recent work of G\"{u}neysu and Pigola \cite{GP2017}, whose argument was based on Taylor expansion. Our approach, on the other hand, uses stochastic analysis. In this article we consider global curvature bounds, on manifolds with a boundary, for which it is possible to obtain uniform versions of the estimates we proved in \cite{Ch-Tho-Tha:2017a}. Our main results in this direction are Theorems \ref{thm:uniformests}, \ref{c1-est-d} and \ref{c1-est-n}. Our results on the spectral projectors are based on these theorems.

Another remarkable application of uniform gradient estimates, as mentioned above, is that they can be used to obtain isoperimetric inequalities. For the case $Z=0$, Buser \cite{Buser82} obtained a lower bound on Cheeger's isoperimetric constant using Poincar\'{e} inequalities; his proof was further simplified by Ledoux \cite{Ledoux94} using a uniform gradient estimate of the type introduced above. Wang \cite{Wang00} then applied Ledoux's argument to obtain lower bounds for various isoperimetric constants using Poincar\'{e}-Sobolev inequalities. Using our new uniform gradient estimates, for the Neumann and Dirichlet semigroups, we consequently obtain isoperimetric inequalities with better constants, given in Subsection \ref{ss:isoineq}.

So the paper is organized as follows. In Section \ref{sec:gradests}, we present the uniform gradient estimates for the heat semigroup on a manifold without a boundary and the Dirichlet and Neumann heat semigroups on manifolds with a boundary, followed in Subsection \ref{ss:isoineq} by the isoperimetric inequalities. In Section \ref{sec:quant}, we present the uniform $C^1$-estimates in an analogous order, followed in Subsection \ref{ss:spec} by our uniform upper bounds for the gradient of the unit spectral projection operators.

\section{Gradient estimates for diffusion semigroups}\label{sec:gradests}

In this section we first consider manifolds without boundary. We then consider Dirichlet and Neumann boundaries, respectively. Denoting by $\Ric$ the Ricci curvature tensor, note that by ${\Ric}^Z \geq K_Z$ we shall mean
\begin{equation}
{\rm Ric}^Z(X,X):=(\Ric-\langle \nabla_{\blankdown}Z, \blank \rangle)(X,X)\geq K_Z|X|^2,\quad X\in TM,
\end{equation}
supposing always that $K_Z$ is a constant. We denote by $\mathcal{B}_b(M)$ the set of all bounded measurable functions on $M$.

\subsection{No boundary}\label{ss:gradnobound}

In this subsection and the next, we denote by $X_t$ an $\frac{1}{2}L$-diffusion on $M$, defined on some maximal random time interval, and denote by $//_t$ the associated stochastic parallel transport and by $B_t$ the martingale part of the antidevelopment. In particular, if $X_0=x$ for some $x \in M$ then $B_t$ is a Brownian motion on the tangent space $T_xM$ starting at the origin. Denote by $\mathcal{Q}_s$ the $\End(T_xM)$-valued solution to the ordinary differential equation
\begin{equation}\label{eq:Qdefn}
\frac{d}{ds} \mathcal{Q}_s  = -\frac12\Ric^Z_{//_s}\mathcal{Q}_s
\end{equation}
along the paths of $X_t$ with $\mathcal{Q}_0 = \id_{T_xM}$ and $$\Ric^Z_{//_s} := //_s^{-1} \Ric^Z //_s.$$ The composition $\mathcal{Q}_s//_s^{-1}$ is called the damped parallel transport from $T_{X_s}M$ to $T_xM$. The following estimate, for the associated minimal semigroup $P_t$, is well known:

\begin{theorem}\label{global-esti}
Suppose $\Ric^Z\geq K_Z$ for some constant $K_Z$. Then for all $u\in \mathcal{B}_b(M)$ we have
\begin{align*}
\|dP_tu\|_{\infty}\leq \sqrt{\frac2\pi}\l(\frac{K_Z}{\e^{K_Zt}-1}\r)^{1/2}\|u\|_{\infty}
\end{align*}
for all $t>0$.
\end{theorem}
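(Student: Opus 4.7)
The plan is to combine Bismut's derivative formula with a Dambis--Dubins--Schwarz time change, the latter being responsible for the sharp constant $\sqrt{2/\pi}$. For $u\in\mathcal{B}_b(M)$, $v\in T_xM$, and any absolutely continuous $h\colon[0,t]\to\R$ with $h(0)=0$ and $h(t)=1$, the Bismut--Elworthy--Li formula gives
\[
dP_tu(v)=\E\!\left[u(X_t)\int_0^t\langle\dot h_s\,\mathcal{Q}_sv,dB_s\rangle\right],
\]
so that, writing $N_t$ for the stochastic integral, the problem reduces to estimating $\E[|N_t|]$, since $|dP_tu(v)|\le\|u\|_\infty\,\E[|N_t|]$.

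The curvature hypothesis enters through a pathwise bound on $\mathcal{Q}$. From \eqref{eq:Qdefn} combined with $\Ric^Z\ge K_Z$,
\[
\frac{d}{ds}|\mathcal{Q}_sv|^2=-\langle\Ric^Z_{//_s}\mathcal{Q}_sv,\mathcal{Q}_sv\rangle\le-K_Z|\mathcal{Q}_sv|^2,
\]
so $|\mathcal{Q}_sv|\le e^{-K_Zs/2}|v|$ almost surely, and consequently
\[
\langle N\rangle_t\le|v|^2\int_0^t\dot h_s^2\,e^{-K_Zs}\,ds=:c_h.
\]
By Dambis--Dubins--Schwarz, $N_t=\beta_{\langle N\rangle_t}$ for some real Brownian motion $\beta$; since $|\beta_\cdot|$ is a non-negative submartingale and $\langle N\rangle_t\le c_h$ is a bounded stopping time in the time-changed filtration, optional sampling produces
\[
\E[|N_t|]=\E[|\beta_{\langle N\rangle_t}|]\le\E[|\beta_{c_h}|]=\sqrt{2c_h/\pi}.
\]

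It remains to choose $h$ optimally. By Cauchy--Schwarz, $\int_0^t\dot h_s^2\,e^{-K_Zs}\,ds\ge K_Z/(e^{K_Zt}-1)$, with equality when $\dot h_s=K_Ze^{K_Zs}/(e^{K_Zt}-1)$. Combining these inequalities and taking the supremum over unit $v\in T_xM$ and over $x\in M$ yields the claimed estimate. The only delicate step is justifying the Bismut formula for a merely bounded measurable (not $C^1$) $u$ on a manifold where the $\tfrac12L$-diffusion may fail to be conservative; this is handled in the standard way by mollifying $u$ and localising at the exit times of a compact exhaustion, then passing to the limit via the minimal property of $P_t$ and dominated convergence.
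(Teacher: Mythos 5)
Your proof is correct and follows essentially the same route as the paper: Bismut's formula, the pathwise bound $|\mathcal{Q}_s v|\le e^{-K_Z s/2}|v|$ from $\Ric^Z\ge K_Z$, a Gaussian comparison for the martingale term giving the constant $\sqrt{2/\pi}$, and the optimal choice of $h$ (which the paper simply writes down, whereas you derive it via Cauchy--Schwarz). Your Dambis--Dubins--Schwarz/optional-sampling justification makes explicit the Gaussian-tail estimate that the paper states in \eqref{esti-martingale}, and is arguably the cleaner way to phrase it since it uses the deterministic bound on the quadratic variation rather than the $L^2$-expectation.
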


\begin{proof}
Suppose $h$ is a bounded adapted process with paths in the Cameron-Martin space $L^{1,2}([0,t];\R)$ such that $h(0)=1$ and $h(t) = 0$. Then, according to Bismut's formula \cite{bismut,Thalmaier97}, we have
\begin{equation}
(dP_tu)_x = -\E^x \left[ u(X_{t}) \int_0^{t} \langle \mathcal{Q}_r h'(r),dB_r\rangle\right].
\end{equation}
Letting $\sigma_t:=\E^x[\int_0^t|h'(s)|^2\|\mathcal{Q}_s\|^2\vd s]^{1/2}$ it follows that
\begin{align}\label{esti-martingale}
\E^x\l[\l|\int_0^{t} \langle \mathcal{Q}_s h'(s),dB_s\rangle\r|\r]&\leq \frac{2}{\sqrt{2\pi} \sigma_t}
\int_0^{\infty}s\exp{\l(-\frac{s^2}{2\sigma_t^2}\r)}\vd s=\sqrt{\frac{2}{\pi}}\sigma_t.
\end{align}
Taking
\begin{equation}\label{eq:choiceofh}
h(s)=\frac{\e^{K_Zt}-\e^{K_Zs}}{\e^{K_Zt}-1}
\end{equation}
we thus have
\begin{equation}
|d P_tu|(x)\leq \sqrt{\frac2\pi}\l(\int_0^th'(s)^2\e^{-K_Zs} \vd s\r)^{1/2} \|u\|_{\infty} = \sqrt{\frac2\pi}\l(\frac{K_Z}{\e^{K_Zt}-1}\r)^{1/2}\|u\|_{\infty}
\end{equation}
as required. \qed
\end{proof}

\begin{remark}\label{rem:sinzerot}
In general, if one obtains $\|dP_tu\|_{\infty}\leq \alpha(t)\|u\|_{\infty}$ then in fact
\begin{equation}
\|dP_tu\|_{\infty}\leq \alpha(s)\|P_{t-s}u\|_{\infty}\leq \alpha(s)\|u\|_{\infty}
\end{equation}
for all $s \in (0,t]$. In Theorem \ref{global-esti}, note that the coefficient is decreasing in $t$ so the upper bound is, in terms of this remark, the best choice. This observation will, however, prove quite useful later in the article.
\end{remark}

Theorem \ref{global-esti} implies a uniform gradient estimate for eigenfunctions. If $u$ is an eigenfunction of $-L$ with eigenvalue $\lambda>0$, in other words $Lu=-\lambda u$, it follows that $P_tu=\e^{-{\lambda t}/{2} }u$ and so, by Theorem \ref{global-esti}, we have
\begin{align*}
\frac{\|d u\|_{\infty}}{\|u\|_{\infty}} \leq \sqrt{\frac{2}{\pi}} \inf_{ t>0}\l(\frac{K_Z^-\e^{\lambda t}}{1-\e^{-K_Z^-t}}\r)^{1/2}= \sqrt{\frac{2}{\pi}}(\lambda+K_Z^-)^{1/2} \l(\frac{\lambda}{\lambda+K_Z^-}\r)^{\lambda/(2 K_Z^-)}.
\end{align*}
For eigenfunctions with Dirichlet boundary conditions, we direct the reader towards the recent article \cite{Ar-Tha-Wa:2017}; see also the next subsection.

\subsection{Dirichlet boundary}

Next, we consider manifolds with boundary $\partial M$, first considering the case in which the diffusion is killed on the boundary. For this we define the stopping time $\tau=\inf\{t: X_t\in \partial M\}$ and define the Dirichlet heat semigroup, acting on bounded measurable functions $u$, by $$P^D_tu(x)=\mathbb{E}^x\left[ 1_{\{t<\tau\}}u(X_t) \right].$$ We denote by $H_{\partial M}$ the mean curvature of the boundary ($H_{\partial M} \geq 0$ implies the boundary is mean-convex).

\begin{theorem}\label{thm-gradient-Dirichlet}
Assume $Z$ is bounded and suppose there exist constants $K_Z, K_0$ and $\theta$ such that $\Ric^Z\geq K_Z$, $\Ric\geq K_0$ and $H_{\partial M}\geq \theta$. Define
\begin{equation}\label{eq:alpha0defn}
\alpha_0:=\frac{1}{2}\l(\max\l\{\theta^{-}, \sqrt{(d-1)K_0^{-}}\r\}+\|Z\|_{\infty}\r)
\end{equation}
and set
\begin{align*}
C(s):=\sqrt{\frac{2}{\pi}}+\sqrt{s}\,\alpha_0\min \l\{ 2,1+\alpha_0\sqrt{\frac{s}{2\pi}}\r\}.
\end{align*}
Then for $u\in \mathcal{B}_b(M)$ and $t>0$ we have
\begin{align*}
\|dP^D_tu\|_{\infty}\leq \frac{\exp\left(\frac12K_Z^{-}s\right)}{\sqrt{s}} \l(C(s)+\frac{1}{4C(s)}\r) \|u\|_{\infty}
\end{align*}
for all $0<s\leq t$.
\end{theorem}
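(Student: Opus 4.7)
The plan is to prove the estimate via a probabilistic (Bismut-type) integration by parts formula for the Dirichlet semigroup, adapting the proof of Theorem \ref{global-esti} to account for killing on $\partial M$, in the style of \cite{Ar-Tha-Wa:2017}. The semigroup property together with Remark \ref{rem:sinzerot} reduces the task to proving, for arbitrary $s>0$, the bound $\|dP_s^D u\|_\infty \le \frac{\exp(\frac12 K_Z^- s)}{\sqrt{s}}\bigl(C(s)+\frac{1}{4C(s)}\bigr)\|u\|_\infty$; the inequality for general $t$ then follows from $\|dP^D_t u\|_\infty\le\|dP_s^D(P^D_{t-s}u)\|_\infty$ combined with $\|P^D_{t-s}u\|_\infty\le\|u\|_\infty$.

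For the bound on $\|dP_s^D u\|_\infty$, I would work with the reflecting $\frac12L$-diffusion $X_t$ on $M$, its boundary local time $\ell_t$, and the damped parallel transport $\mathcal{Q}_r$ from \eqref{eq:Qdefn}, and use a Bismut-type formula of the schematic form
\begin{equation*}
(dP^D_s u)_x(v) = -\E^x\!\left[1_{\{s<\tau\}}u(X_s)\!\left(\int_0^s \langle\mathcal{Q}_r h'(r)v,\vd B_r\rangle + \int_0^s h(r)\,\II\bigl(\mathcal{Q}_r v\bigr)\,\vd\ell_r\right)\right],
\end{equation*}
valid for any bounded adapted $h$ with $h(0)=1$ and $h(s)=0$, where $\II$ is the second fundamental form; this is essentially the formula used in \cite{Ar-Tha-Wa:2017}. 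I would then choose $h$ essentially linear in $r$ so that $|h'|\sim 1/s$, and estimate the two contributions separately.

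The martingale term, bounded using $\|\mathcal{Q}_r\|\le \exp(K_Z^- r/2)$ and the $\sqrt{2/\pi}$ constant of \eqref{esti-martingale}, produces the prefactor $\exp(\frac12 K_Z^- s)/\sqrt{s}$ together with the leading $\sqrt{2/\pi}$ inside $C(s)$. The boundary term is treated by reducing $\II$ to $H_{\partial M}^-$ in the relevant direction and then controlling the expected boundary local time $\E^x[\ell_s]$ uniformly in $x$: applying Itô's formula to a regularisation of the distance $\rho_{\partial M}$ to $\partial M$ and invoking the Laplacian comparison under $\Ric\ge K_0$ and $H_{\partial M}\ge\theta$ bounds $\Delta\rho_{\partial M}$ in terms of $\max\{\theta^-,\sqrt{(d-1)K_0^-}\}$, while $\|Z\|_\infty$ enters through the drift. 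This forces exactly the constant $\alpha_0$ of \eqref{eq:alpha0defn} and yields $\E^x[\ell_s]$ of order $\alpha_0\sqrt{s}$, giving the second summand of $C(s)$; the $\min\{2,1+\alpha_0\sqrt{s/(2\pi)}\}$ inside $C(s)$ records the better of two natural bounds on $\int_0^s h(r)\,\vd\ell_r$.

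The main obstacle I anticipate is the fine bookkeeping required to distil the exact form $C(s)+\frac{1}{4C(s)}$: this should arise either by tracking the correlation between the martingale and boundary contributions via Cauchy--Schwarz applied to $\E|M+N|$, or by optimising over a free parameter encoded in the choice of $h$. Analytically, the delicate point is establishing the uniform (in $x\in M$) bound on $\E^x[\ell_s]$ under the three geometric hypotheses simultaneously.
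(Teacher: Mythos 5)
Your proposed Bismut formula is the wrong one for the Dirichlet semigroup. The formula you write down, featuring the reflecting diffusion, its boundary local time $\ell_t$, and a local-time integral against the second fundamental form $\mathrm{II}$, is the one appropriate for the \emph{Neumann} semigroup. It cannot be correct here as written: for the reflecting diffusion, on the event $\{s<\tau\}$ (where $\tau$ is the first hitting time of $\partial M$) the local time $\ell_t$ is identically zero for $t\le s$, so your boundary integral vanishes under the indicator and the formula reduces to the boundaryless one. As a result, the geometric data $\Ric\ge K_0$, $H_{\partial M}\ge\theta$ and $\|Z\|_\infty$ never enter through the channel you describe.

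The paper's proof instead works with the \emph{killed} diffusion and exploits a different local martingale: for $u_s:=P^D_{t-s}u$, the process $du_s(//_s\mathcal{Q}_sh(s))-u_s(X_s)\int_0^s\langle\mathcal{Q}_r\dot h(r),dB_r\rangle$ is a local martingale, and evaluating it at $0$ and $t\wedge\tau$ produces two terms: the expected martingale part on $\{t<\tau\}$, and a \emph{boundary hitting} term $\E^x[1_{\{\tau<t\}}\,dP^D_{t-\tau}u(//_\tau\mathcal{Q}_\tau h(\tau))]$. The crucial step — entirely absent from your proposal — is to estimate $|dP^D_{t-s}u|$ at a boundary point $y$. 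Since both $u_s$ and the survival probability $\psi(s,\cdot):=\P\{\tau>s\}$ vanish on $\partial M$, their gradients at $y$ are normal derivatives, and one gets $|du_s|(y)\le\|u\|_\infty\,|d\psi(t-s,\cdot)|(y)$. The paper then imports the estimate from \cite{Ar-Tha-Wa:2017}, $|d\psi(s,\cdot)|(y)\le\sqrt{2/(\pi s)}+\min\{2\alpha_0,\alpha_0+\alpha_0^2\sqrt{s/(2\pi)}\}$, and this is where $\alpha_0$ comes from — not from an expected local time bound. Finally, choosing $h_s=(t-s)/t$ and optimising $\max_{\epsilon\in[0,1]}\{\sqrt{1-\epsilon}+\epsilon C(t)\}$ over the free parameter $\epsilon$ produces the exact constant $C(t)+\frac{1}{4C(t)}$; your guess that an optimisation of this kind was in play is correct, but the two terms being balanced are $\P\{t<\tau\}$ and the boundary-hitting contribution, not a martingale/local-time pair.
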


\begin{proof}
Suppose that $h$ is a bounded adapted process with paths belonging to the Cameron-Martin space $L^{1,2}([0,t];[0,1])$. Since $u_s:=P^D_{t-s}u$ is a solution to the backwards diffusion equation on $[0,t] \times M$, it follows, by It\^{o}'s formula and the Weitzenb\"{o}ck formula, that
\begin{equation}
du_{s}(//_s \mathcal{Q}_s h(s)) - u_{s}(X_s)\int_0^s \langle \mathcal{Q}_r \dot{h}(r),dB_r\rangle
\end{equation}
is a local martingale, where $\mathcal{Q}$ is defined by \eqref{eq:Qdefn}. If in addition $h(0) = 1$ and $h(t) = 0$, then evaluating at times $0$ and $s = t\wedge \tau$, taking expectations and using the initial and boundary conditions we obtain
\begin{align}
(dP^D_tu)_x =\text{ }& \E^x\l[1_{\lbrace t > \tau \rbrace} dP^D_{t-\tau}u(//_{\tau} \mathcal{Q}_{\tau} h(\tau))\r]\\
&-\E^x\left[1_{\lbrace t < \tau \rbrace}  u(X_{t}) \int_0^{t\wedge \tau} \langle \mathcal{Q}_r \dot{h}(r),dB_r\rangle\right].\label{eq:bisone}
\end{align}
Consequently
\begin{align*}
|dP^D_tu|(x)\leq \text{ }& \|u\|_{\infty}\e^{K_Z^{-}t/2} \E^x\l[\int_0^t\dot{h}_s^{\,2} \,\vd s\r]^{1/2} \sqrt{\P\lbrace t < \tau \rbrace}\\
&+\e^{K_Z^{-}t/2}\E\left[1_{\{\tau<t\}}\,h(\tau)\,|d P^{D}_{t-\tau}u|(X_{\tau})\right]
\end{align*}
and it remains to estimate the expectation on the right-hand side (the one involving the stopping time). For this, we follow the approach of \cite{Ar-Tha-Wa:2017}, fixing $y\in \partial M$ and letting $y^{\varepsilon}:=\exp_y(\varepsilon N)$ where $N$ is the inward pointing unit normal vector field on $\partial M$. Since $\psi(s,x):=\P\{\tau>s\}$ and $u_s=P^D_{t-s}u$ vanish on $\partial M$ when $s\in [0,t)$ it follows that
\begin{align*}
|d u_{s}|(x)=|Nu_s(x)|=\lim_{\epsilon \downarrow 0} \frac{|u_s(y^{\epsilon})|}{\epsilon}, \quad |d\psi(s,\blank)|(x)=\lim_{\epsilon \downarrow 0} \frac{|\psi(s,y^{\epsilon})|}{\epsilon}.
\end{align*}
Now letting $X_t^{\epsilon}$ be a $\frac 12L$-diffusion process starting at $y^{\epsilon}$, with first hitting time $\tau^{\epsilon}$ to the boundary $\partial M$, we have
\begin{align*}
|d u_s|(y)=\lim_{\epsilon \downarrow 0} \frac{1}{\epsilon} \left|\E[1_{\{t-s<\tau^{\epsilon}\}}u(X_{t-s}^{\epsilon})]\right| \leq \|u\|_{\infty}\,|d \psi(t-s, y)|.
\end{align*}
Thus
\begin{align*}
|dP^D_tu|(x)\leq\text{ }& \|u\|_{\infty}\e^{K_Z^{-}t/2}\E^x\l[\int_0^t \dot{h}_s^{\,2}\, \vd s\r]^{1/2}\sqrt{\P\lbrace t < \tau \rbrace}\\
&+\e^{K_Z^{-}t/2}\|u\|_{\infty}\,\E\left[1_{\{\tau<t\}}h_{\tau}|d \psi(t-\tau, \blank)(X_{\tau})|\right].
\end{align*}
It has been proved in \cite{Ar-Tha-Wa:2017} that
\begin{align*}
|d\psi(s,\blank)|(y)\leq \sqrt{\frac {2}{\pi s}} +\min\l\{2\alpha_0,\alpha_0+\alpha_0^2\sqrt{\frac{s}{2\pi}}\r\}
\end{align*}
for each $y\in \partial M$, so consequently
\begin{align*}
&|d P^D_tu|(x)\\
&\leq \|u\|_{\infty}\e^{K_Z^{-}t/2}\left(\E^x\l[\int_0^t \dot{h}_s^{\,2}\, \vd s\r]^{1/2}\sqrt{\P\lbrace t < \tau \rbrace}\right.\\
&\quad+\left.\E^x\l[1_{\{\tau<t\}}h_{\tau}\l( \sqrt{\frac {2}{\pi (t-\tau)}} +\min\l\{2\alpha_0,\alpha_0+\alpha_0^2\sqrt{\frac{t-\tau}{2\pi}}\r\}\r)\r]\right).
\end{align*}
Choosing $h_s=\frac{t-s}{t}$ yields the estimate
\begin{align*}
&|dP^D_tu|(x)\\
&\leq \|u\|_{\infty}\frac{\exp\left(K_Z^{-}t/2\right)}{\sqrt{ t}}\max_{\epsilon\in [0,1]}\l\{\sqrt{1-\epsilon}+\epsilon\l(\sqrt{\frac{2}{\pi}}+ \sqrt{t}\min\l\{2\alpha_0, \alpha_0+\alpha_0^2\sqrt{\frac{t}{2\pi}}\r\}\r)\r\}\\
&=\frac{\e^{K_Z^{-}t/2}}{\sqrt{ t}} \l(C(t)+\frac{1}{4C(t)}\r)\|u\|_{\infty}
\end{align*}
from which the result follows, by Remark \ref{rem:sinzerot}. \qed
\end{proof}

\begin{remark}If $u$ is a Dirichlet eigenfunction of $-L$ with eigenvalue $\lambda$, then Theorem \ref{thm-gradient-Dirichlet} implies
\begin{align*}
\frac{\|du\|_{\infty}}{\|u\|_{\infty}}\leq \inf_{t\geq 0} \frac{\exp\left(\frac12(\lambda+K_Z^-) t\right)}{\sqrt{t}} \l(C(t)+\frac{1}{4C(t)}\r)
\end{align*}
for all $t>0$ and therefore, taking $t=(\lambda+K_Z^-)^{-1}$, we have
\begin{align*}
\frac{\|du\|_{\infty}}{\|u\|_{\infty}}\leq \sqrt{\e (\lambda+K_Z^-)}\l(C\l(\frac{1}{\lambda +K_Z^-}\r)+\frac{\lambda+K_Z^-}{4C}\r).
\end{align*}
\end{remark}

\subsection{Neumann boundary}

The Neumann heat semigroup, acting on bounded measurable functions, is defined by $$P^N_tu(x) = \E^x[ u(\tilde{X}_t)],$$ where now $\tilde{X}_t$ denotes a $\frac{1}{2}L$-diffusion reflected on $\partial M$. In particular $N P^N_tu = 0$ for $t>0$, so $P^N_tu$ solves the diffusion equation with Neumann boundary conditions \cite[Section~3.1]{Wbook2}.

Denoting by $N$ the inward pointing unit normal vector, we define the second fundamental form $\rm II$ of the boundary $\partial M$ by $${\rm II}(X,Y)=-\l<\nabla_XN,Y\r>, \quad X,Y\in T_y\partial M, \quad y \in \partial M$$ where $T_y\partial M$ denotes the tangent space of $\partial M$ at $y$. In order to study non-convex boundaries with Neumann boundary conditions, we will perform a conformal change of metric such that the boundary is convex under the new metric. In particular, if we denote by $g$ the original metric, we will use the fact that if $$\mathcal{D}:=\{\phi\in C_b^2(M): \inf \phi = 1,\ {\rm II}\geq - N\log \phi \}$$ and $\phi\in \mathcal{D}$ then the boundary $\partial M$ is convex under the metric $\phi^{-2}g$.

\begin{theorem}\label{Gradient-Neumann}
If there exist $\phi\in \mathcal{D}$ and a constant $K_{\phi}$ such that
\begin{align}\label{K-phi}
{\rm Ric}^Z+2L \log \phi-2|\nabla \log \phi|^2 \geq K_{\phi}
\end{align}
then for $u\in \mathcal{B}_b(M)$ we have
\begin{align*}
|dP^N_tu|(x)\leq &\sqrt{\frac{2}{\pi}}  \l(\frac{K_{\phi}}{\e^{K_{\phi}t}-1}\r)^{1/2}\|\phi\|_{\infty}\|u\|_{\infty}
\end{align*}
for all $t>0$.
\end{theorem}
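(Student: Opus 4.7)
The plan is to reduce the claim to the no-boundary estimate of Theorem \ref{global-esti} by means of a conformal change of metric that convexifies the boundary. The starting observation is that, since $\phi\in\mathcal D$, the inequality $\II\geq-N\log\phi$ is exactly the condition which makes $\partial M$ convex with respect to the new metric $\tilde g:=\phi^{-2}g$. Moreover, a direct calculation identifies the expression $\Ric^Z+2L\log\phi-2|\nabla\log\phi|^2$ appearing in \eqref{K-phi} as (up to the standard conformal rescaling) the Bakry--\'Emery Ricci tensor of $\tfrac12L$ measured in $\tilde g$, so the hypothesis \eqref{K-phi} is precisely the $\tilde g$-analogue of the curvature lower bound $\Ric^Z\geq K_Z$ that underlies Theorem \ref{global-esti}.

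Working on $(M,\tilde g)$, where the reflecting $\tfrac12L$-diffusion now encounters a convex boundary, I would appeal to the strengthened Bismut formula established in \cite{Ch-Tha-Tho:2017b}. Concretely, for a bounded adapted process $h$ with paths in $L^{1,2}([0,t];\R)$ satisfying $h(0)=1$ and $h(t)=0$,
\begin{equation*}
(dP^N_tu)_x=-\E^x\l[u(\tilde X_t)\int_0^t \langle \tilde{\mathcal Q}_sh'(s),dB_s\rangle\r],
\end{equation*}
where $\tilde{\mathcal Q}_s$ is the damped parallel transport built with respect to $\tilde g$. The ODE defining $\tilde{\mathcal Q}_s$ picks up an additional boundary local-time term driven by the $\tilde g$-second fundamental form $\widetilde{\II}\geq 0$; this term enters with a non-positive sign, and combined with the curvature lower bound from the previous paragraph it yields the pointwise estimate $\|\tilde{\mathcal Q}_s\|\leq\e^{-K_\phi s/2}$.

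From here the argument mirrors that of Theorem \ref{global-esti}: choosing $h$ as in \eqref{eq:choiceofh} and invoking the Gaussian bound \eqref{esti-martingale} produces the estimate $|dP^N_tu|_{\tilde g}(x)\leq \sqrt{2/\pi}\,(K_\phi/(\e^{K_\phi t}-1))^{1/2}\|u\|_\infty$ on the $\tilde g$-differential. To translate back to $g$, the conformal relation $\tilde g=\phi^{-2}g$ introduces a pointwise factor bounded by $\|\phi\|_\infty$ between the two norms on covectors (and between the associated Brownian motions used in the two versions of the Bismut formula), which accounts for the $\|\phi\|_\infty$ in the claimed inequality.

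The main obstacle is justifying the Bismut formula on $(M,\tilde g)$ with the correct damped parallel transport and verifying the algebraic identification of \eqref{K-phi} as the Bakry--\'Emery condition in the conformal metric. Once these are in hand---as they are in \cite{Ch-Tha-Tho:2017b}---the remainder is a direct repetition of the computation carried out in the proof of Theorem \ref{global-esti}.
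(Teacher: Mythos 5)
There is a genuine gap, centred on your claimed pointwise bound $\|\tilde{\mathcal Q}_s\|\leq\e^{-K_\phi s/2}$. The conformal change $\tilde g=\phi^{-2}g$ convexifies the boundary, but it also introduces a random time change between the reflecting $\tfrac12L$-diffusion and the $\tilde g$-Brownian motion, and it alters the drift. When one keeps the Bismut formula phrased on the $g$-clock (which is unavoidable, since $P^N_t$ is the $g$-semigroup), the damped parallel transport one obtains does \emph{not} satisfy a clean exponential bound. What actually holds, by the results of \cite{Ch-Tha-Tho:2017b} and \cite[Theorem 3.2.1]{Wbook2}, is the random bound
\begin{equation}
\|\tilde{\mathcal{Q}}_s\|\leq \exp\Bigl(-\tfrac{1}{2}K_{\phi} s-\tfrac{1}{2}\int_0^s\phi^2 L\phi^{-2}(\tilde{X}_r)\,\vd r+\int_0^sN\log \phi(\tilde{X}_r)\,\vd l_r\Bigr),
\end{equation}
which retains both a drift correction $\phi^2L\phi^{-2}$ and a boundary local-time term in $N\log\phi$; the latter is typically positive when the boundary is non-convex and cannot simply be discarded. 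Your argument implicitly sets both of these to zero, which is where it fails.

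The step you are missing is how to integrate out those extra terms. The paper does this by inserting a factor $\phi^{-2}(\tilde{X}_s)$ (using $\inf\phi=1$, which costs precisely the $\|\phi\|_\infty$), so that one must control $\E^x[\phi^{-2}(\tilde X_s)\|\tilde{\mathcal Q}_s\|^2]$; it then applies It\^o's formula to $\phi^{-2}(\tilde X_t)$ to show that
\begin{equation}
\phi^{-2}(\tilde{X}_t)\exp\Bigl(-\int_0^t\phi^{2}L\phi^{-2}(\tilde{X}_s)\,\vd s+2\int_0^tN\log \phi(\tilde{X}_s)\,\vd l_s\Bigr)
\end{equation}
is a local martingale, hence a supermartingale, which exactly cancels the unwanted drift and local-time contributions in expectation and yields $\E^x[\phi^{-2}(\tilde X_s)\|\tilde{\mathcal Q}_s\|^2]\leq\e^{-K_\phi s}$. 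Without this compensation argument your estimate does not close. Separately, your heuristic that \eqref{K-phi} is ``the Bakry--\'Emery Ricci tensor in $\tilde g$ up to rescaling'' and that the $\|\phi\|_\infty$ factor comes from comparing covector norms is not literally correct and would need to be replaced by the explicit computation above; the $\|\phi\|_\infty$ in the final bound is a byproduct of the insertion of $\phi^{-2}$, not of a naive norm comparison between the two metrics.
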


\begin{proof}
Since there exist $\phi\in \mathcal{D}$ and $K_{\phi} \in \mathbb{R}$ such that \eqref{K-phi} holds, it follows that $$\frac{1}{2}\Ric^Z\geq \frac{1}{2} K_{\phi}-L\log \phi +|\nabla \log \phi|^2=\frac{1}{2}K_{\phi}+\frac{1}{2}\phi^2 L\phi^{-2}\  \ \mbox{and}\quad\II \geq -N\log \phi.$$ Therefore, by our main results in \cite{Ch-Tha-Tho:2017b} and \cite[Theorem 3.2.1]{Wbook2}, it follows that there exists an adapted $\End(T_xM)$-valued process $\lbrace \tilde{\mathcal{Q}}_s \rbrace_{s\in [0,t]}$ such that
\begin{align}\label{Q-upper}
\|\tilde{\mathcal{Q}}_s\|\leq \exp\l(-\frac{1}{2}K_{\phi} s-\frac{1}{2}\int_0^s\phi^2 L\phi^{-2}(\tilde{X}_r)\,\vd r+\int_0^sN\log \phi(\tilde{X}_r)\,\vd l_r\r)
\end{align}
with
\begin{align}
|d P_t^Nu|(x)= \bigg|\mathbb{E}^x\l[u(\tilde{X}_t)\int_0^t h'(s)\tilde{\mathcal{Q}}_s dB_s\r]\bigg|\leq \|u\|_{\infty}\,\E^x\l[\l|\int_0^t h'(s)\tilde{\mathcal{Q}}_s dB_s\r|\r]
\end{align}
for any $h\in C^1([0,t])$ with $h(0)=0$ and $h(t)=1$. Now, using the fact that
\begin{equation}
\E^x\l[\int_0^t \l|h'(s)\r|^2 \|\tilde{\mathcal{Q}}_s\|^2 \vd s\r]^{1/2} \leq \|\phi\|_\infty \E^x\l[\int_0^t  \phi^{-2}(\tilde{X}_s) \l|h'(s)\r|^2 \|\tilde{\mathcal{Q}}_s\|^2 \vd s\r]^{1/2}
\end{equation}
it follows, as in the proof of Theorem \ref{global-esti}, that
\begin{equation}
|d P_t^Nu|(x) \leq\|\phi\|_\infty \|u\|_{\infty} \sqrt{\frac{2}{\pi}} \left(\int_0^t \l|h'(s)\r|^2 \E^x\l[\phi^{-2}(\tilde{X}_s)  \|\tilde{\mathcal{Q}}_s\|^2\r] \vd s\right)^{1/2}.
\end{equation}
To estimate the expectation, we see by the It\^{o} formula that
\begin{align*}
\vd \phi^{-2}(\tilde{X}_t)&=\langle \nabla \phi^{-2}(\tilde{X}_t), u_t\,\vd B_t \rangle+L\phi^{-2}(\tilde{X}_t)\,\vd t +N\phi^{-2}(\tilde{X}_t)\,\vd l_t\\
&=\langle \nabla \phi^{-2}(\tilde{X}_t), u_t\,\vd B_t \rangle-2\phi^{-2}(\tilde{X}_t)\l(-\frac{1}{2}\phi^{2}L\phi^{-2}(\tilde{X}_t)\,\vd t +N\log\phi(\tilde{X}_t)\,\vd l_t\r)
\end{align*}
which implies
\begin{align*}
\phi^{-2}(\tilde{X}_t)\exp\l(-\int_0^t\phi^{2}(\tilde{X}_s)L\phi^{-2}(\tilde{X}_s)\,\vd s+2\int_0^tN\log \phi(\tilde{X}_s)\,\vd l_s\r)
\end{align*}
is a local martingale, from which it follows that
\begin{align*}
\E\l[\phi^{-2}(\tilde{X}_t)\exp\left(-\int_0^t\phi^{2}(\tilde{X}_s)L\phi^{-2}(\tilde{X}_s)\,\vd s+2\int_0^tN\log \phi(\tilde{X}_s)\,\vd l_s\right)\r]\leq \phi^{-2}(x).
\end{align*}
Putting all this together, and using the fact that $\inf \phi =1$, we have
\begin{equation}
|d P_t^Nu|(x) \leq \|\phi\|_\infty \|u\|_{\infty} \sqrt{\frac{2}{\pi}} \left(\int_0^t \l|h'(s)\r|^2 \e^{-K_\phi s} \vd s\right)^{1/2}.
\end{equation}
Choosing $h$ as in Theorem \ref{global-esti}, with $K_\phi$ in place of $K_Z$, completes the proof. \qed
\end{proof}

Using information about the boundary, an explicit function $\phi$ can be constructed. For instance, following Wang's construction (see \cite[p.1436]{W07} or \cite[Theorem 3.2.9]{Wbook2}), we have the following corollary:

\begin{corollary}\label{cor-est}
Assume $\Ric^Z\geq K_Z$ for some constant $K_Z$ and that there exist non-negative constants $\sigma$ and $\theta$ such that $-\sigma\leq \II \leq \theta$ and a positive constant $r_0$ such that on $\partial_{r_0}M:=\{x\in M\colon \rho_{\partial}(x)\leq r_0\}$ the function $\rho_{\partial}$ is smooth, the norm of $Z$ is bounded and $\Sect \leq k$ for some $k\geq 0$. Then for $u\in \mathcal{B}_b(M)$ we have
\begin{align*}
\|dP_t^Nu\|_{\infty}\leq \sqrt{\frac{2}{\pi s}}\exp \l(\frac 12\sigma d r_1+\frac 12\l(K_Z-2\sigma \delta_{r_0}(Z)-\frac{2\sigma d}{r_1}-2\sigma^2\r)^-s\r) \|u\|_{\infty}
\end{align*}
for all $0<s\leq t$ where
\begin{equation}
r_1:=r_0\wedge \l(\frac{1}{\sqrt{k}}\arcsin\l(\sqrt{\frac{k}{k+\theta^2}}\r)\r)\quad\mbox{and}\quad \delta_{r_0}(Z):=\sup_{x\in \partial_{r_0} M}|Z|(x).
\end{equation}
In particular
\begin{align*}
\|d P_t^Nu\|_{\infty}\leq \sqrt{\frac{2\exp\left(\sigma d r_1+1\right)}{\pi(t \wedge 1)}}\,\max\l\{\l(K_Z-\sigma \delta_{r_0}(Z)-\frac{\sigma d}{r_1}-2\sigma^2\r)^-, 1\r\}^{1/2} \|u\|_{\infty}
\end{align*}
for all $t>0$.
\end{corollary}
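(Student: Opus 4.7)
The plan is to apply Theorem~\ref{Gradient-Neumann} with an explicit choice of $\phi\in\SD$ constructed, following Wang \cite[Theorem~3.2.9]{Wbook2}, from the distance function $\rho_\partial$ to the boundary. On the tubular neighbourhood $\partial_{r_0}M$, where $\rho_\partial$ is smooth, one sets $\phi(x)=f(\rho_\partial(x)\wedge r_1)$ for a non-decreasing smooth profile $f\colon[0,\infty)\to[1,\infty)$ with $f(0)=1$, $f$ constant on $[r_1,\infty)$, and $(\log f)'(0)\geq\sigma$. Since $\II\geq-\sigma$ and $N\log\phi=(\log f)'(0)\geq\sigma$ on $\partial M$, the boundary condition $\II\geq -N\log\phi$ holds, so $\phi\in\SD$. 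The specific value of $r_1$ is precisely the largest radius up to which a Hessian comparison for $\rho_\partial$ is available under $\Sect\leq k$ and $\II\leq\theta$.

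Next I would estimate the curvature quantity $K_\phi$ in \eqref{K-phi}. Outside the tube $\phi$ is constant and only $\Ric^Z\geq K_Z$ contributes. Inside the tube, expanding $\log\phi=(\log f)\circ\rho_\partial$ produces
\begin{equation*}
2L\log\phi - 2|\nabla\log\phi|^2 = 2(\log f)'(\rho_\partial)\,L\rho_\partial + 2(\log f)''(\rho_\partial) - 2\l((\log f)'(\rho_\partial)\r)^2,
\end{equation*}
where $L\rho_\partial=\Delta\rho_\partial+Z\rho_\partial$ is controlled from below by the Hessian comparison (contributing a term of order $-d/r_1$) together with $|Z\rho_\partial|\leq\delta_{r_0}(Z)$. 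Choosing $f$ so that $0\leq(\log f)'\leq\sigma$ and $(\log f)''\geq 0$ then delivers a lower bound on $K_\phi$ of the form $K_Z-c_1\sigma d/r_1-c_2\sigma\delta_{r_0}(Z)-c_3\sigma^2$, while tracking $f(r_1)/f(0)$ through the construction yields $\|\phi\|_\infty\leq\e^{\sigma d r_1/2}$.

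Inserting these two bounds into Theorem~\ref{Gradient-Neumann}, using the elementary inequality $K_\phi/(\e^{K_\phi t}-1)\leq \e^{K_\phi^- t}/t$ valid for all $K_\phi\in\R$ and $t>0$, and Remark~\ref{rem:sinzerot} to replace $t$ by any $s\in(0,t]$, yields the first displayed estimate. The "in particular" bound then follows by optimising over $s$: the map $s\mapsto s^{-1/2}\e^{K_\phi^- s/2}$ is minimised at $s=1/K_\phi^-$, giving $\sqrt{\e K_\phi^-}$, and splitting into the cases $t\geq 1/K_\phi^-$ (take the optimal $s$) and $t<1/K_\phi^-$ (take $s=t$, so that $\e^{K_\phi^- s/2}\leq\e^{1/2}$) shows that the result is in each case dominated by $\sqrt{2\e\exp(\sigma d r_1)/(\pi(t\wedge 1))}\cdot\max\{K_\phi^-,1\}^{1/2}$, which, after substituting the lower bound for $K_\phi$, yields the stated form.

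The main obstacle, and the heart of the argument, is Step~1: producing a single smooth function $\phi$ on $M$ that simultaneously realises the convexifying boundary condition $\II\geq -N\log\phi$ and provides the required control on $L\log\phi-|\nabla\log\phi|^2$ throughout the tube, in terms of the geometric data $\sigma$, $\theta$, $k$, $r_0$ and the bound on $Z$ on $\partial_{r_0}M$. This is the content of Wang's construction; once it is in place the remaining steps are the direct substitution and one-parameter optimisation sketched above.
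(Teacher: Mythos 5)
Your proposal matches the paper's proof: both plug Wang's explicit conformal factor $\phi=f\circ(\rho_\partial\wedge r_1)$ into Theorem \ref{Gradient-Neumann}, quote the bounds on $K_\phi$ and $\|\phi\|_\infty$ from the cited construction (\cite{Ch-Tha-Tho:2017b}, \cite{Wang05}), and then choose $s$ via Remark \ref{rem:sinzerot} (the paper picks $s=(1\vee K_\phi^-)^{-1}\wedge t$ and invokes inequality \eqref{eq:useest}, which amounts to the same thing as your two-case optimisation). One slip in your heuristic sketch of $f$: requiring $(\log f)''\geq 0$ is incompatible with $(\log f)'(0)=\sigma$, $0\leq(\log f)'\leq\sigma$ and $f$ constant past $r_1$ (these force $(\log f)'$ to decrease from $\sigma$ to $0$, so $(\log f)''$ must be negative somewhere), and in Wang's actual profile the $-2\sigma d/r_1$ and $-2\sigma^2$ contributions emerge from a Riccati-type balance involving a negative $(\log f)''$ term rather than from convexity of $\log f$; since you, like the paper, defer to the cited construction for the precise constants, this does not affect the validity of your argument.
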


\begin{proof}
Under the assumptions, we can construct the following function $\phi \in \mathcal{D}$:
\begin{align*}
\log \phi(x) =\frac{\sigma}{\alpha}\int_0^{\rho_{\partial}(x)}\big(\ell(s)-\ell(r_1)\big)^{1-d}\vd s \int_{s\wedge r_1}^{r_1}\big(\ell(u)-\ell(r_1)\big)^{d-1}\vd u,
\end{align*}
where
\begin{align}\label{fun-h}
\ell(t):=\cos{\sqrt{k}t}-\frac{\theta}{\sqrt{k}}\sin {\sqrt{k}t}, \quad t\geq 0,
\end{align}
so that $r_1=r_0\wedge \ell^{-1}(0)$, and
\begin{equation}
\alpha:=(1-\ell(r_1))^{1-d}\int_0^{r_1}\big(\ell(s)-\ell(r_1)\big)^{d-1}\vd s.
\end{equation}
Then, as checked in \cite{Ch-Tha-Tho:2017b} and \cite[Theorem 1.1]{Wang05}, we have
\begin{equation}
{\rm Ric}^Z+2L \log \phi-2|\nabla \log \phi|^2 \geq K_Z-2\sigma\delta_{r_0}(Z)-\frac{2 \sigma d}{r_1}-2\sigma^2
\end{equation}
and $\|\phi\|_{\infty}\leq \e^{\sigma d r_1/2}$. By Theorem \ref{Gradient-Neumann} and Remark \ref{rem:sinzerot}, it follows that
\begin{align*}
\| d P^N_tu\|_{\infty}\leq \sqrt{\frac{2 \e^{K_{\phi}^-s}}{\pi s}}\,\|u\|_{\infty}\,\|\phi\|_{\infty},\quad s\in (0,t].
\end{align*}
Taking
\begin{align*}
s=\left(1\vee\left(K_Z-2\sigma \delta_{r_0}(Z)-\frac{2\sigma d}{r_1}-2\sigma^2\right)^-\right)^{-1}\wedge t
\end{align*}
and using
\begin{equation}\label{eq:useest}
\left(t\wedge \frac{1}{c\vee 1}\right)^{-1/2}\leq \frac{\max \lbrace \sqrt{c},1\rbrace}{\sqrt{t\wedge 1}},
\end{equation}
which holds for any non-negative constant $c$, we complete the proof. \qed
\end{proof}

\subsection{Application: isoperimetric inequalities}\label{ss:isoineq}
Now suppose $L$ is of the form $L=\Delta+\nabla V$ for some $V\in C^2(M)$ and set $\mu(\vd x):=\e^{V(x)}\vd x$. Consider the following two isoperimetric constants:
\begin{align*}
\kappa^D:=\inf_{ \mu(A)>0}\frac{\mu_{\partial}(\partial A)}{\mu(A)},
\quad \kappa^N:=\inf_{ \mu(A)\in (0,\frac{1}{2}]}\frac{\mu_{\partial}(\partial A\setminus \partial M)}{\mu(A)},
\end{align*}
where $A$ runs over all smooth and connected bounded domains contained in $M$ and $\mu_{\partial}(\partial A)$ is the area of $\partial A$ induced by $\mu$. Consider also
\begin{align*}
&\lambda_1^D:=\inf\{\mu(|\nabla f|^2): f\in C_0^\infty(M),\ f|_{\partial M}=0,\ \mu(f^2)=1\},\\
&\lambda_1^N:=\inf\{\mu(|\nabla f|^2): f\in C_0^\infty(M),\ \mu(f^2)=1,\ \mu(f)=0\}.
\end{align*}
The quantities $\lambda_1^D$ and $\lambda_1^N$ are known as the first Dirichlet and Neumann eigenvalues, respectively. Note that since we do not assume $M$ compact, these quantities may not be true $L^2$-eigenvalues for the operator $L$. In general $\lambda_1^D>0$ (resp. $\lambda_1^N>0$) does not imply $\kappa^D>0$ (resp. $\kappa^N>0$), but these implications do hold under uniform gradient estimates for the corresponding diffusion semigroups. In particular, there is the following result, taken from \cite[Theorem 1.2]{Wang04} and \cite[Theorem~2.5.3]{Wbook1} (note that the semigroups considered in \cite{Wang04,Wbook1} have generator $L$, as opposed to $\frac{1}{2}L$):

\begin{theorem}[Wang \cite{Wang04}]\label{Poincare-iso}
Let $P_t^D$ and $P_t^N$ denote the Dirichlet and Neumann semigroups of $\frac{1}{2}L$ on $M$, respectively.
\begin{enumerate}
\item [$(1)$] If $\|d P_{2t}^Df\|_{\infty}\leq \frac{c}{\sqrt{t\wedge 1}}\|f\|_{\infty}$ holds for some $c>0$ and all $t>0$, $f\in \mathcal{B}_b(M)$, then
\begin{align*}
\kappa^D\geq \frac{1-\e^{-1}}{c}\left(\sqrt{\lambda_1^D}\wedge \lambda_1^D\right).
\end{align*}
\item [$(2)$] If $\mu(M)=1$ and $\|d P_{2t}^Nf\|_{\infty}\leq \frac{c}{\sqrt{t\wedge 1}}\|f\|_{\infty}$ holds for some $c>0$ and all
$t>0$, $f\in \mathcal{B}_b(M)$, then
\begin{align*}
\kappa^N\geq \frac{1-2\e^{-1}}{2c}\left(\sqrt{\lambda_1^N}\wedge \lambda_1^N\right).
\end{align*}
\end{enumerate}
\end{theorem}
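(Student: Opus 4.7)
The plan is to apply Ledoux's strategy, linking the $L^\infty$ gradient estimate on the semigroup to the spectral-gap Poincaré inequality through a fundamental-theorem-of-calculus identity and Green's formula, followed by an optimization in the time parameter.

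For part (1), fix a smooth bounded domain $A\subset M$. Since $L=\Delta+\nabla V$ is symmetric with respect to $\mu$, the semigroup $P_t^D$ is self-adjoint on $L^2(\mu)$. Combining the semigroup property $P_{2t}^D=(P_t^D)^2$ with the spectral-gap bound $\|P_t^Df\|_{L^2(\mu)}\leq \e^{-t\lambda_1^D/2}\|f\|_{L^2(\mu)}$ gives
\[
\int_A P_{2t}^D 1_A\,d\mu = \|P_t^D 1_A\|_{L^2(\mu)}^2 \leq \e^{-t\lambda_1^D}\mu(A),
\]
so $\int_A(1_A-P_{2t}^D 1_A)\,d\mu\geq \mu(A)(1-\e^{-t\lambda_1^D})$. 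For a matching upper bound, since $P_s^D 1_A$ is smooth for $s>0$ I differentiate under the integral and invoke the weighted Green's identity $\int_A Lu\,d\mu=\int_{\partial A}\langle\nabla u,\nu_A\rangle\,d\mu_\partial$ (valid because $d\mu=\e^V\,dx$), where $\nu_A$ denotes the outward unit normal of $A$, obtaining
\[
\int_A(1_A-P_{2t}^D 1_A)\,d\mu = -\frac12\int_0^{2t}\int_{\partial A}\langle\nabla P_s^D 1_A,\nu_A\rangle\,d\mu_\partial\,ds.
\]
The hypothesis rewritten at time $s$ reads $\|dP_s^D 1_A\|_\infty\leq c/\sqrt{(s/2)\wedge 1}$, and for $t\leq 1$ the time integral on $[0,2t]$ evaluates to a constant multiple of $\sqrt t$, yielding an upper bound of the shape $Cc\sqrt t\,\mu_\partial(\partial A)$. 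Combining the two estimates gives $\mu(A)(1-\e^{-t\lambda_1^D})\leq Cc\sqrt t\,\mu_\partial(\partial A)$, and choosing $t=(1/\lambda_1^D)\wedge 1$ together with the elementary inequality $1-\e^{-x}\geq(1-\e^{-1})\min(x,1)$ amalgamates the two regimes $\lambda_1^D\geq 1$ and $\lambda_1^D<1$ into the single expression $\sqrt{\lambda_1^D}\wedge\lambda_1^D$.

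Part (2) runs on the same template, but with mean-zero functions to exploit the Neumann gap. Setting $f:=1_A-\mu(A)$, so that $\mu(f)=0$ and $P_t^N 1_A=\mu(A)+P_t^N f$ (since $P_t^N$ preserves constants and total mass), self-adjointness of $P_t^N$ together with $\|P_t^N f\|_{L^2(\mu)}\leq \e^{-t\lambda_1^N/2}\|f\|_{L^2(\mu)}$ yields
\[
\int_A(1_A-P_{2t}^N 1_A)\,d\mu = \mu(A)(1-\mu(A))-\|P_t^N f\|_{L^2(\mu)}^2\geq \mu(A)(1-\mu(A))(1-\e^{-t\lambda_1^N}),
\]
which under $\mu(A)\leq 1/2$ is bounded below by $\tfrac12\mu(A)(1-\e^{-t\lambda_1^N})$. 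On the gradient side, the crucial point is that on $\partial A\cap\partial M$ the outward normal $\nu_A$ coincides with $-N$; the Neumann boundary condition $NP_s^N 1_A=0$ then forces the contribution of $\partial A\cap\partial M$ in Green's formula to vanish, so only $\mu_\partial(\partial A\setminus\partial M)$ enters the bound, matching the definition of $\kappa^N$. The analogous time integration and optimization then yield the announced inequality.

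The main obstacle will be the careful treatment of the Neumann boundary in Green's formula: one must verify that the contribution of the portion of $\partial A$ sitting inside $\partial M$ truly drops out, so that $\mu_\partial(\partial A\setminus\partial M)$ appears on the right-hand side rather than $\mu_\partial(\partial A)$, since otherwise the resulting inequality would not bound $\kappa^N$ as defined. Secondary but routine points are the justification of differentiation under the integral for $s>0$ together with $L^1$-continuity as $s\downarrow 0$, and fine-tuning of the time parameter to match the precise numerical constants announced in the statement.
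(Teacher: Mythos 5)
The paper does not prove this theorem: it is quoted verbatim from Wang \cite{Wang04} and \cite[Theorem~2.5.3]{Wbook1}, so there is no internal proof to compare against. Your reconstruction of Wang's argument correctly identifies the Ledoux-type strategy, and the two key structural points — (i) using $\int_A(1_A-P_{2t}1_A)\,d\mu = \mu(A) - \|P_t 1_A\|_{L^2(\mu)}^2$ together with the spectral gap for the lower bound, and the fundamental-theorem-of-calculus identity plus Green's formula for the upper bound; and (ii) for the Neumann case, observing that the Neumann condition $NP_s^N 1_A = 0$ kills the contribution of $\partial A\cap\partial M$, so that precisely $\mu_\partial(\partial A\setminus\partial M)$ appears — are both correct and are exactly what make this approach work.

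However, you understate a real difficulty when you dismiss ``fine-tuning of the time parameter to match the precise numerical constants'' as routine. Carrying out your upper bound explicitly, with $\partial_sP_s^D = \tfrac12 LP_s^D$ and the hypothesis $\|dP_s^D 1_A\|_\infty \le c/\sqrt{(s/2)\wedge 1}$, gives for $t\le 1$
\begin{align}
\int_A (1_A - P_{2t}^D 1_A)\,d\mu \;\le\; \tfrac12\,\mu_\partial(\partial A)\int_0^{2t}\frac{c\sqrt2}{\sqrt s}\,ds \;=\; 2c\sqrt t\,\mu_\partial(\partial A),
\end{align}
which, combined with the lower bound $\mu(A)(1-\e^{-t\lambda_1^D})$ and the choice $t = \lambda_1^D{}^{-1}\wedge 1$, yields $\kappa^D \ge \tfrac{1-\e^{-1}}{2c}\bigl(\sqrt{\lambda_1^D}\wedge\lambda_1^D\bigr)$ — a factor of $2$ weaker than the claim. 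Even optimizing exactly over $t$ only improves the numerical prefactor marginally and does not reach $\tfrac{1-\e^{-1}}{c}$. Likewise, your Neumann lower bound $\tfrac12\mu(A)(1-\e^{-t\lambda_1^N})$ produces a constant of the form $\tfrac{1-\e^{-1}}{4c}$, whose shape ($1-\e^{-1}$) does not match the stated $\tfrac{1-2\e^{-1}}{2c}$ (the latter plainly arises from a lower bound of the form $\mu(A)\bigl(\tfrac12-\e^{-t\lambda_1^N}\bigr)$, i.e., a different way of treating the $\mu(A)(1-\mu(A))$ prefactor). So as written, your proposal would prove a statement with genuinely different, and in case (1) strictly worse, constants. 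To match Wang's theorem you would need to either locate a sharper step in \cite{Wang04} or \cite{Wbook1} that I do not see implied by your outline, or verify that the constants in the cited theorem are indeed the ones your method produces. This is not merely cosmetic, since the paper feeds these exact constants into its Theorems that follow.
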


For the case in which $Z = \nabla V$, for some function $V$, and setting ${\Ric}^V := \Ric - \Hess\, V$, Theorem \ref{Poincare-iso}, in conjunction with Theorem \ref{thm-gradient-Dirichlet} and Theorem \ref{Gradient-Neumann}, immediately implies the following two theorems:

\begin{theorem}
Suppose $\Ric^V\geq K_V$, $\Ric\geq K_0$ and $H_{\partial M}\geq \theta$ for some constants $K_V, K_0$ and~$\theta$. Then
\begin{align*}
\kappa^D\geq \frac{\sqrt{\pi}\,\left(\e^{-1}-\e^{-2}\right)\l(\sqrt{\lambda_1^D}\wedge \lambda_1^D\r)}{\max\left\{\sqrt{K_Z^-},1\right\}\left(1+\pi/8\right)+2\alpha_0\sqrt{\pi}}
\end{align*}
where $\alpha_0$ is defined as in \eqref{eq:alpha0defn}.
\end{theorem}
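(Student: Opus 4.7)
The plan is to extract from Theorem \ref{thm-gradient-Dirichlet} a constant $c$ with $\|dP^D_{2t}f\|_\infty\le c/\sqrt{t\wedge 1}\,\|f\|_\infty$ for every $t>0$ and every $f\in \mathcal{B}_b(M)$, and then to feed $c$ into Theorem \ref{Poincare-iso}(1). The only genuine tactical decision is the value of the free parameter $s$ in Theorem \ref{thm-gradient-Dirichlet}. Setting $\kappa:=K_Z^-\vee 1$, I would take $s:=(2t)\wedge (2/\kappa)$, which ensures $K_Z^- s\le 2$, hence $\exp(K_Z^- s/2)\le \e$, while still shrinking proportionally to $t$ when $t$ is small. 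This particular factor of $2$ (rather than the naive $s=1/\kappa$) is what eventually turns the $\sqrt{2\e/\pi}$ one would otherwise obtain into the cleaner $\e/\sqrt{\pi}$ appearing in the statement, and is the one place where foresight is required.

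The main computation is to simplify $C(s)+1/(4C(s))$. Since $C(s)\ge \sqrt{2/\pi}$ we have $1/(4C(s))\le \sqrt{\pi/2}/4$, while $\min\{2,1+\alpha_0\sqrt{s/(2\pi)}\}\le 2$ gives $C(s)\le \sqrt{2/\pi}+2\alpha_0\sqrt{s}$; combined with the arithmetic identity $\sqrt{2/\pi}\cdot (\pi/8)=\sqrt{\pi/2}/4$, these yield
\begin{equation*}
C(s)+\frac{1}{4C(s)}\le \sqrt{\frac{2}{\pi}}\l(1+\frac{\pi}{8}\r)+2\alpha_0\sqrt{s}.
\end{equation*}
In the regime $t\ge 1/\kappa$, where $s=2/\kappa$ and $1/\sqrt{s}=\sqrt{\kappa/2}$, multiplying through by $\e\sqrt{\kappa/2}$ produces exactly
\begin{equation*}
c:=\frac{\e\,\max\l\{\sqrt{K_Z^-},1\r\}(1+\pi/8)}{\sqrt{\pi}}+2\e\alpha_0,
\end{equation*}
and this dominates $c/\sqrt{t\wedge 1}\,\|f\|_\infty$ trivially since $t\wedge 1\le 1$. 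In the opposite regime $t<1/\kappa$, the same computation with $s=2t$ gives $\e(1+\pi/8)/\sqrt{\pi t}+2\e\alpha_0$, which is $\le c/\sqrt{t\wedge 1}$ because $\sqrt{\kappa}\ge 1$ and $t\wedge 1=t$ in this range. So a single $c$ works uniformly.

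Feeding this $c$ into Theorem \ref{Poincare-iso}(1) gives $\kappa^D\ge (1-\e^{-1})\l(\sqrt{\lambda_1^D}\wedge \lambda_1^D\r)/c$, and the identity $1-\e^{-1}=\e(\e^{-1}-\e^{-2})$ rearranges the prefactor into the claimed $\sqrt{\pi}(\e^{-1}-\e^{-2})$ divided by $\max\{\sqrt{K_Z^-},1\}(1+\pi/8)+2\alpha_0\sqrt{\pi}$. No individual estimate is subtle; the main obstacle is just the bookkeeping of constants and the correct calibration of $s$, so that the two cancellations $\sqrt{2/\pi}\cdot(\pi/8)=\sqrt{\pi/2}/4$ and $\e\sqrt{\kappa/2}\cdot 2\alpha_0\sqrt{2/\kappa}=2\e\alpha_0$ land exactly on the coefficients displayed in the theorem.
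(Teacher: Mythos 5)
Your proof is correct and fills in exactly what the paper leaves to the reader, since the paper states that this theorem (along with its Neumann analogue) follows "immediately" from Theorems \ref{Poincare-iso} and \ref{thm-gradient-Dirichlet}, and only writes out the argument for the Neumann case. Your choice of parameter, $s=(2t)\wedge(2/\kappa)$ with $\kappa=K_Z^-\vee 1$, is precisely the analogue of the paper's choice $s=t\wedge\frac{1}{1\vee K_\phi^-}$ in the Neumann proof (where their $s$ is substituted into the estimate at time $2s$), and your arithmetic simplification of $C(s)+1/(4C(s))$ lands on the stated constant.
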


\begin{theorem}
Suppose ${\rm Ric}^V+2L \log \phi- 2|\nabla \log \phi|^2 \geq K_{\phi}$ for some $\phi\in \mathcal{D}$ and constant $K_{\phi}$. Then
\begin{align*}
\kappa^N\geq \frac{\sqrt{\pi}\,\left(\e^{-1}-2\e^{-2}\right)\l(\sqrt{\lambda_1}\wedge \lambda_1\r)}{2\max\left\{\sqrt{K_{\phi}^-},1\right\}\|\phi\|_{\infty}}.
\end{align*}
\end{theorem}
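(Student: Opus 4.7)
The plan is to reduce the claim to Theorem \ref{Poincare-iso}(2) by extracting from Theorem \ref{Gradient-Neumann} a Neumann gradient estimate of the form $\|dP^N_{2t}f\|_{\infty}\leq c/\sqrt{t\wedge 1}\,\|f\|_\infty$ with an explicit constant $c$, then substituting into the Cheeger-type lower bound. The numerical value of $c$ must be tuned so that the resulting constant collapses to the stated $(\e^{-1}-2\e^{-2})$ numerator.

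First, I apply Theorem \ref{Gradient-Neumann} at time $2t$ together with Remark \ref{rem:sinzerot} to obtain, for every $s\in(0,2t]$,
\begin{equation*}
\|dP^N_{2t}f\|_\infty \leq \sqrt{\tfrac{2}{\pi}}\l(\tfrac{K_\phi}{\e^{K_\phi s}-1}\r)^{1/2}\|\phi\|_\infty\|f\|_\infty.
\end{equation*}
The elementary inequality $K_\phi/(\e^{K_\phi s}-1)\leq \e^{K_\phi^- s}/s$, valid for all $K_\phi\in\R$ and $s>0$ (checking the cases $K_\phi\geq 0$ and $K_\phi<0$ separately via $\e^x\geq 1+x$, as in the proof of Corollary \ref{cor-est}), upgrades this to
\begin{equation*}
\|dP^N_{2t}f\|_\infty \leq \sqrt{\tfrac{2}{\pi}}\,\frac{\e^{K_\phi^- s/2}}{\sqrt{s}}\,\|\phi\|_\infty\|f\|_\infty.
\end{equation*}

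Second, I choose $s:=2t\wedge \tfrac{2}{1\vee K_\phi^-}$, which enforces $K_\phi^- s\leq 2$ and hence $\e^{K_\phi^- s/2}\leq\e$. Rewriting $s=2\l(t\wedge\tfrac{1}{K_\phi^-\vee 1}\r)$ and invoking \eqref{eq:useest} with $c=K_\phi^-$ gives
\begin{equation*}
\frac{1}{\sqrt{s}} \leq \frac{\max\{\sqrt{K_\phi^-},1\}}{\sqrt{2(t\wedge 1)}}.
\end{equation*}
Combining these two bounds yields
\begin{equation*}
\|dP^N_{2t}f\|_\infty \leq \frac{\e\,\max\{\sqrt{K_\phi^-},1\}\,\|\phi\|_\infty}{\sqrt{\pi}\sqrt{t\wedge 1}}\,\|f\|_\infty,
\end{equation*}
so Theorem \ref{Poincare-iso}(2) applies with $c=\e\,\max\{\sqrt{K_\phi^-},1\}\,\|\phi\|_\infty/\sqrt{\pi}$.

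Third, substituting this $c$ into the Cheeger-type bound of Theorem \ref{Poincare-iso}(2) and using the identity $1-2\e^{-1}=\e\,(\e^{-1}-2\e^{-2})$ cancels the surplus factor of $\e$ and produces exactly the claimed lower bound for $\kappa^N$. The main (minor) obstacle is the tuning of $s$: one must absorb precisely one full factor of $\e$ in the exponential, which forces the threshold $K_\phi^- s\leq 2$ rather than the $K_\phi^- s\leq 1$ used in Corollary \ref{cor-est}; otherwise the cancellation with the $\e^{-1}$ implicit in $1-2\e^{-1}$ is incomplete and one lands on a slightly different constant.
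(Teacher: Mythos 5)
Your proof is correct and follows essentially the same route as the paper: apply Theorem \ref{Gradient-Neumann} together with Remark \ref{rem:sinzerot}, use the elementary bound $K_\phi/(\e^{K_\phi s}-1)\leq \e^{K_\phi^- s}/s$, choose the time parameter so that exactly one factor of $\e$ is absorbed, invoke \eqref{eq:useest}, and substitute into Theorem \ref{Poincare-iso}(2). The only cosmetic difference is a reparametrization ($s\in(0,2t]$ in yours versus $2s$ with $s\in(0,t]$ in the paper), and you correctly write $K_\phi^-$ where the paper's proof has a typo reading $K_Z^-$.
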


\begin{proof}
By Theorem \ref{Gradient-Neumann} and Remark \ref{rem:sinzerot} we have that
\begin{align*}
|dP^N_{2t}u|(x)\leq \sqrt{\frac{2}{\pi}} \l(\frac{K_{\phi}}{\e^{2K_{\phi}s}-1}\r)^{1/2}\|\phi\|_{\infty}\,\|u\|_{\infty} \leq \frac{\e^{K_Z^- s}}{\sqrt{\pi s}}\,  \|\phi\|_{\infty}\,\|u\|_{\infty}
\end{align*}
for all $s \in (0,t]$. Choosing $s = t \wedge \frac{1}{1\vee K_Z^-}$ yields the result, by Theorem \ref{Poincare-iso} and inequality \eqref{eq:useest}. \qed
\end{proof}

\section{Gradient estimates for $C_b^2$ functions}\label{sec:quant}

In this section we apply the gradient estimates of the previous section to obtain uniform estimates for the derivatives of $C^2_b$-functions (that is, bounded twice continuously differentiable functions with bounded derivatives). We have three different cases, depending on the boundary behaviour of the function.

\subsection{No boundary}

The estimates of this subsection are uniform versions of the localized estimates that were recently proved by the authors in \cite{Ch-Tho-Tha:2017a}.

\begin{theorem}\label{thm:uniformests}
Suppose $\Ric^Z \geq K_Z$ for some constant $K_Z$. Then for all $u \in C_b^2(M)$ we have
\begin{align}
\ |d u|(x)\leq
\begin{cases}\displaystyle
\sqrt{\frac2{\pi }}\l(\l(\frac{K_Z}{\e^{K_Z t}-1}\r)^{1/2}\|u\|_\infty + \frac{1}{ \sqrt{-K_Z}} \log\left(\sqrt{\e^{-K_Z t}-1} + \e^{-K_Z t/2}\right)\|Lu\|_\infty\r),\\[3mm]
\displaystyle\sqrt{\frac2\pi}\l(\frac{1}{\sqrt{t}}\,\|u\|_\infty+\sqrt{t}\|Lu\|_\infty\r),\\[3mm]
\displaystyle\sqrt{\frac{2}{\pi}}\l(\l(\frac{K_Z}{\e^{K_Z t}-1}\r)^{1/2}\|u\|_\infty + \frac{1}{ \sqrt{K_Z}} \tan^{-1}\left(\sqrt{\e^{K_Z t}-1}\right)\|Lu\|_\infty\r),
\end{cases}
\end{align}
\normalsize
for the cases $K_Z<0$, $K_Z=0$ and $K_Z >0$, respectively, for all $t>0$.
\end{theorem}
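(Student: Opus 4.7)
The plan is to combine Duhamel's formula for the $\frac{1}{2}L$-diffusion semigroup $P_t$ with the uniform estimate of Theorem \ref{global-esti}. Since for $u \in C_b^2(M)$ the map $s \mapsto P_s u$ is differentiable in $s$ with $\partial_s P_s u = \frac{1}{2} P_s(Lu)$, integrating from $0$ to $t$ yields
\begin{equation}
u = P_t u - \frac{1}{2} \int_0^t P_s(Lu)\,\vd s.
\end{equation}
Taking the exterior derivative in $x$ and applying the triangle inequality gives
\begin{equation}
|du|(x) \leq |dP_t u|(x) + \frac{1}{2} \int_0^t |dP_s(Lu)|(x)\,\vd s,
\end{equation}
and Theorem \ref{global-esti}, applied separately to $u$ and to $Lu$, bounds each integrand by $\sqrt{2/\pi}\,(K_Z/(\e^{K_Z s}-1))^{1/2}$ times the corresponding sup-norm.

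The remaining task is to evaluate the scalar integral
\begin{equation}
I(t) := \int_0^t \l(\frac{K_Z}{\e^{K_Z s}-1}\r)^{1/2}\vd s
\end{equation}
in the three regimes. For $K_Z > 0$, the substitution $v = \sqrt{\e^{K_Z s}-1}$ reduces the integrand to $2\,\vd v/(v^2+1)$ and yields $I(t) = (2/\sqrt{K_Z})\tan^{-1}(\sqrt{\e^{K_Z t}-1})$. For $K_Z < 0$ the analogous substitution leads to a rational function integrable by partial fractions, giving a logarithmic antiderivative; after applying the identity $(\e^{-K_Z t/2}+\sqrt{\e^{-K_Z t}-1})(\e^{-K_Z t/2}-\sqrt{\e^{-K_Z t}-1})=1$ to absorb the difference of logarithms into a single term, one obtains $I(t) = (2/\sqrt{-K_Z})\log(\e^{-K_Z t/2}+\sqrt{\e^{-K_Z t}-1})$. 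The case $K_Z = 0$ arises as the limit, with integrand $s^{-1/2}$, so $I(t) = 2\sqrt{t}$. Multiplying $I(t)$ by $\frac{1}{2}\sqrt{2/\pi}\,\|Lu\|_\infty$ and adding the direct Theorem \ref{global-esti} bound on $|dP_t u|(x)$ reproduces the three claimed inequalities.

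The only real subtlety lies in justifying the interchange of $d$ with the time integral together with the semigroup identity $\partial_s P_s u = \frac{1}{2}P_s(Lu)$. Both follow routinely from the $C_b^2$ hypothesis on $u$, which (once $Z$ is assumed bounded, or $Lu$ itself bounded) places $Lu$ in $\mathcal{B}_b(M)$, combined with the smoothness of $P_s$ on bounded measurable functions and the uniform bounds of Theorem \ref{global-esti} that permit dominated convergence. Beyond this justification step, the proof is a direct computation rather than a delicate estimate, the substantive content having already been packaged into Theorem \ref{global-esti}.
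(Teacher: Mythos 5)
Your proof is correct and takes essentially the same route as the paper: differentiate Duhamel's/Kolmogorov's formula to get $|du|(x) \le |dP_t u|(x) + \tfrac12\int_0^t |dP_s Lu|(x)\,\vd s$, apply Theorem~\ref{global-esti} to both terms, and evaluate the resulting scalar integral in the three regimes. You additionally spell out the substitutions behind the integral evaluation, which the paper leaves as "by calculation," and correctly flag the (mild) integrability/dominated-convergence point needed to justify the interchange of differentiation and integration.
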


\begin{proof}
As before, denote by $P_t$ the semigroup for the diffusion $X_t$, with generator $\frac{1}{2}L$. By differentiating the Kolmogorov equation, we get
\begin{align}\label{main-ineq-1}
|d u|(x)\leq |d P_tu|(x)+\frac{1}{2}\int_0^t|d P_sLu|(x)\,\vd s.
\end{align}
By Theorem \ref{global-esti} we have
\begin{align*}
|d P_tu|(x)\leq \sqrt{\frac2\pi}\l(\frac{K_Z}{\e^{K_Zt}-1}\r)^{1/2}\|u\|_{\infty}.
\end{align*}
Combining this with the first term on the right-hand side of \eqref{main-ineq-1}, and similarly for the second term, we find
\begin{equation}\label{eq:prelimestuni}
|d u|(x)\leq  \sqrt{\frac2\pi}\l(\l(\frac{K_Z}{\e^{K_Zt}-1}\r)^{1/2}\|u\|_\infty+ \frac{1}{2}\int_0^t \l(\frac{K_Z}{\e^{K_Z s}-1}\r)^{1/2}ds\,\|Lu\|_\infty\r).
\end{equation}
The right-hand side of this inequality is, by calculation, equal to the expressions given in the theorem, for each of the three cases. \qed
\end{proof}

Minimizing over $t$, we obtain the following corollary:

\begin{corollary}\label{cor:minnedests}
Suppose $\Ric^Z \geq K_Z$ for some constant $K_Z$ with $\|L u\|_{\infty}>0$. Then for all $u \in C_b^2(M)$ we have
\begin{align}
|d u|^2(x)\leq\text{ }&
\begin{cases}\displaystyle
\frac{2}{\pi}\|u\|_{\infty} \|L u\|_{\infty}\left( \sqrt{1+\beta} + \frac{\sinh^{-1}(\sqrt{\beta})}{\sqrt{\beta}}\right)^2,\\
\displaystyle\frac{8}{\pi}\|u\|_{\infty} \|L u\|_{\infty},\\
\displaystyle\frac{2}{\pi}\|u\|_{\infty} \|L u\|_{\infty}\l(\sqrt{1+\beta} + \frac{\tan^{-1}\left(\sqrt{\frac{-\beta}{1+\beta}}\right)}{\sqrt{-\beta}}\r)^2,\\
\displaystyle\sqrt{\frac{\pi}{2 K_Z}}\|L u\|_{\infty},\\
\end{cases}
\end{align}
for the cases $K_Z < 0$, $K_Z = 0$, $\|L u\|_{\infty} \|u\|_{\infty}^{-1} > K_Z > 0$ and $K_Z \geq \|L u\|_{\infty} \|u\|_{\infty}^{-1}$, respectively, where $\beta := -K_Z\|u\|_{\infty} \|L u\|_{\infty}^{-1} $.
\end{corollary}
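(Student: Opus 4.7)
The corollary is obtained by optimising $t>0$ in the bound assembled during the proof of Theorem \ref{thm:uniformests}, namely
$$|du|(x) \le \sqrt{\tfrac{2}{\pi}}\bigl(A(t)\|u\|_\infty + B(t)\|Lu\|_\infty\bigr),$$
where $A(t) := (K_Z/(\e^{K_Zt}-1))^{1/2}$ (read by continuity at $K_Z=0$) and $B(t) := \tfrac{1}{2}\int_0^t A(s)\,\vd s$, so that $B'(t) = \tfrac{1}{2}A(t)$. Using $2AA' = -K_Z^2 \e^{K_Zt}/(\e^{K_Zt}-1)^2$, the first-order condition for $F(t) := A(t)\|u\|_\infty + B(t)\|Lu\|_\infty$ collapses, independently of the sign of $K_Z$, to the single relation $\e^{K_Zt^*} = 1/(1+\beta)$, with $\beta := -K_Z\|u\|_\infty/\|Lu\|_\infty$.

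This equation admits a positive root precisely when $\beta > -1$, i.e.\ $K_Z < \|Lu\|_\infty/\|u\|_\infty$, which covers the first three cases. Substituting back yields the regime-independent identity
$$A(t^*)^2\|u\|_\infty^2 = (1+\beta)\|u\|_\infty\|Lu\|_\infty,$$
accounting for the common $\sqrt{1+\beta}$ summand in each squared bound. The $B(t^*)\|Lu\|_\infty$ term is then handled branch by branch. For $K_Z<0$ ($\beta>0$) one gets $\sqrt{\e^{-K_Zt^*}-1} = \sqrt{\beta}$ and $\e^{-K_Zt^*/2} = \sqrt{1+\beta}$, converting the logarithm in Theorem \ref{thm:uniformests} into $\sinh^{-1}(\sqrt{\beta})$ via $\log(x+\sqrt{1+x^2}) = \sinh^{-1}(x)$. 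For $K_Z=0$ the critical point $t^* = \|u\|_\infty/\|Lu\|_\infty$ gives $F(t^*) = 2\sqrt{\|u\|_\infty\|Lu\|_\infty}$ directly. For $0<K_Z<\|Lu\|_\infty/\|u\|_\infty$ one obtains $\sqrt{\e^{K_Zt^*}-1} = \sqrt{-\beta/(1+\beta)}$, producing the $\tan^{-1}$ summand. Cleaning up the prefactor via $\sqrt{|K_Z|} = \sqrt{|\beta|\,\|Lu\|_\infty/\|u\|_\infty}$ and squaring yield the three stated bounds.

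For the remaining regime $K_Z \geq \|Lu\|_\infty/\|u\|_\infty$ (equivalently $\beta \leq -1$), the first-order condition has no finite solution, so one passes to $t\to\infty$ in the $K_Z>0$ branch of Theorem \ref{thm:uniformests}: $A(t)\to 0$ and $\tan^{-1}(\sqrt{\e^{K_Zt}-1}) \to \pi/2$, giving $F(t) \to \tfrac{\pi}{2\sqrt{K_Z}}\|Lu\|_\infty$ and $|du| \leq \sqrt{\pi/(2K_Z)}\,\|Lu\|_\infty$, as required. The only real obstacle is bookkeeping: verifying that the three branch-specific first-order conditions genuinely collapse to the single relation $\e^{K_Zt^*} = 1/(1+\beta)$, and then pushing the $\sinh^{-1}$ and $\tan^{-1}$ substitutions through without losing track of signs.
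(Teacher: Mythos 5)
Your proposal is correct and takes essentially the same approach as the paper, which states only ``Minimizing over $t$, we obtain the following corollary'' and gives no further detail; you have filled in exactly the calculus that the authors elide. The first-order condition you derive is right: from $2AA' = -K_Z^2\e^{K_Zt}/(\e^{K_Zt}-1)^2$ and $B'=\tfrac12 A$ one gets $\tfrac{K_Z\e^{K_Zt^*}}{\e^{K_Zt^*}-1}=\|Lu\|_\infty/\|u\|_\infty$, which rearranges to $\e^{K_Zt^*}=1/(1+\beta)$, and your back-substitutions giving $A(t^*)^2\|u\|_\infty=(1+\beta)\|Lu\|_\infty$, the $\sinh^{-1}$ and $\tan^{-1}$ forms, and the $t\to\infty$ limit when $\beta\le-1$ all check out. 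One remark worth flagging: your limiting argument yields $|du|(x)\le\sqrt{\pi/(2K_Z)}\|Lu\|_\infty$ in the fourth regime, i.e.\ a bound on $|du|$ rather than on $|du|^2$; the corollary as printed places that expression under the $|du|^2$ heading, which appears to be a typographical slip in the paper rather than an error in your derivation.
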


Note that the right-hand side of the above inequality is continuous in $K_Z$. In particular
\begin{equation}
\lim_{\beta \downarrow 0}\left(\sqrt{1+\beta} + \frac{\sinh^{-1}(\sqrt{\beta})}{\sqrt{\beta}}\right)=2=\lim_{\beta \uparrow 0}\l(\sqrt{1+\beta} + \frac{\tan^{-1}\left(\sqrt{\frac{-\beta}{1+\beta}}\right)}{\sqrt{-\beta}}\r)
\end{equation}
and similarly for the two cases concerning $K_Z>0$. For the case $L u=0$ (which for $Z=0$ is to say that $u$ is harmonic), Theorem \ref{thm:uniformests} recovers the well-known fact that if $\Ric^Z \geq 0$ then such $u$ must be constant. But more generally, Theorem \ref{thm:uniformests} implies that if $\Ric^Z\geq K_Z$ with $K_Z\leq 0$, with $u$ a bounded $C^2$ function satisfying $Lu = 0$, then
\begin{equation}
\|du\|_\infty \leq \sqrt{\frac{-2K_Z}{\pi}}\,\|u\|_\infty.
\end{equation}

For a simpler estimate than the one given by Theorem \ref{thm:uniformests}, there is the following, in which we introduce a parameter $\delta$ to emphasise that there is no explicit dependence on time (see also Subsection \ref{ss:spec} below):

\begin{corollary}\label{cor-1}
Suppose $\Ric^Z\geq K_Z$ for some constant $K_Z$. Then for $u\in C_b^2(M)$ we have
\begin{align*}
\|du\|_{\infty}\leq \sqrt{\frac2\pi}\exp\left(\frac{K_Z^{-}}{2\delta^2}\right)\left(\delta \|u\|_{\infty}+\delta^{-1}\|Lu\|_{\infty}\right)
\end{align*}
for all $\delta >0$.
\end{corollary}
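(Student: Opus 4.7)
The plan is to start from the intermediate inequality \eqref{eq:prelimestuni} used in the proof of Theorem~\ref{thm:uniformests}, replace the sign-of-$K_Z$ case analysis by a single elementary envelope, and then relabel the time parameter as $\delta = 1/\sqrt{t}$.

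First I would record the uniform elementary bound
\[
\frac{K_Z}{\e^{K_Z t}-1}\le \frac{\e^{K_Z^{-} t}}{t},\qquad t>0,
\]
valid for every real $K_Z$. For $K_Z\ge 0$ this is just $\e^{K_Z t}-1\ge K_Z t$. For $K_Z<0$ it is equivalent, with $x:=|K_Z|t$, to $1-\e^{-x}\ge x\e^{-x}$; the latter follows since $f(x):=1-\e^{-x}-x\e^{-x}$ has $f(0)=0$ and $f'(x)=x\e^{-x}\ge 0$. Taking square roots gives
\[
\left(\frac{K_Z}{\e^{K_Z t}-1}\right)^{1/2}\le \frac{\e^{K_Z^{-} t/2}}{\sqrt t}.
\]

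Next I would apply this bound twice in \eqref{eq:prelimestuni}: directly for the first term, and under the integral sign for the second, where monotonicity of $s\mapsto\e^{K_Z^{-} s/2}$ yields
\[
\int_0^t\left(\frac{K_Z}{\e^{K_Z s}-1}\right)^{1/2}\vd s\le \e^{K_Z^{-} t/2}\int_0^t\frac{\vd s}{\sqrt s}=2\sqrt t\,\e^{K_Z^{-} t/2}.
\]
Combining the two estimates produces the $\delta$-free form
\[
|du|(x)\le \sqrt{\frac{2}{\pi}}\,\e^{K_Z^{-} t/2}\left(\frac{1}{\sqrt t}\,\|u\|_\infty+\sqrt t\,\|Lu\|_\infty\right),\qquad t>0.
\]

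Finally I would set $t=1/\delta^2$, so that $1/\sqrt t=\delta$, $\sqrt t=\delta^{-1}$, and $t/2=1/(2\delta^2)$, which immediately gives the claimed inequality for every $\delta>0$.

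The only delicate point in the argument is the uniform elementary bound $K_Z/(\e^{K_Z t}-1)\le \e^{K_Z^{-}t}/t$, which collapses the three case distinctions of Theorem~\ref{thm:uniformests} into one clean envelope and provides the correct exponent $K_Z^{-}/(2\delta^2)$ in the final corollary; once that is in hand, the rest is a straightforward substitution.
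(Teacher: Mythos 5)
Your proposal is correct and follows essentially the same route as the paper: start from \eqref{eq:prelimestuni}, apply a uniform bound on $\bigl(K_Z/(\e^{K_Z t}-1)\bigr)^{1/2}$, and set $t=\delta^{-2}$. Note that the paper records this bound as $\bigl(K_Z/(\e^{K_Z t}-1)\bigr)^{1/2}\leq \e^{K_Z^- t/2}$, which must be a typo missing the factor $1/\sqrt{t}$ (it fails already at $K_Z=0$), and your version $\bigl(K_Z/(\e^{K_Z t}-1)\bigr)^{1/2}\leq \e^{K_Z^- t/2}/\sqrt{t}$ is the correct one that actually produces the $\delta\|u\|_\infty+\delta^{-1}\|Lu\|_\infty$ structure after substitution; you also supply the elementary verification that the paper omits.
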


\begin{proof}
This follows from Theorem \ref{thm:uniformests} by \eqref{eq:prelimestuni} and the fact that
\begin{equation}
\l(\frac{K_Z}{\e^{K_Zt}-1}\r)^{1/2} \leq \exp\left(\frac{K^-_Z t}{2}\right),
\end{equation}
by setting $t = \delta^{-2}$. \qed
\end{proof}

\subsection{Dirichlet boundary}

\begin{theorem}\label{c1-est-d}
Suppose $\Ric^Z\geq K_Z$, $\Ric\geq K_0,$ and $H_{\partial M}\geq \theta$ for some constants $K_V, K_0$ and~$\theta$. Then for $u\in C_b^2(M)$ with $u|_{\partial M}=0$ we have
\begin{align*}
\|d u\|_{\infty}\leq \exp\left(\frac{K_Z^-}{2\delta^2}\right)\l(\sqrt{\frac{2}{\pi}}+\frac{1}{4}\sqrt{\frac{\pi}{2}}+\frac{2\alpha_0}{\delta}\r) \left(\delta \|u\|_{\infty}+\delta^{-1}\|Lu\|_{\infty}\right)
\end{align*}
for all $\delta >0$, where
\begin{align*}
\alpha_0=\frac{1}{2}\l(\max\l\{\theta^{-}, \sqrt{(d-1)K_0^{-}}\r\}+\|Z\|_{\infty}\r).
\end{align*}
\end{theorem}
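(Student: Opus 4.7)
My plan is to mirror the argument of Corollary \ref{cor-1}, using the Dirichlet heat semigroup $P_t^D$ in place of the minimal semigroup $P_t$ and invoking Theorem \ref{thm-gradient-Dirichlet} in place of Theorem \ref{global-esti}. The starting point is a Duhamel-type identity: applying It\^o's formula to $s \mapsto u(X_{s \wedge \tau})$ and taking expectations (using that $u$ and $Lu$ are bounded since $u \in C_b^2(M)$), then observing that the boundary contribution $\E^x[1_{\{\tau \leq t\}} u(X_\tau)]$ vanishes because $u|_{\partial M} = 0$, yields
\begin{equation*}
u(x) = P_t^D u(x) - \frac{1}{2}\int_0^t P_s^D(Lu)(x)\,\vd s, \qquad t > 0.
\end{equation*}
Differentiating in $x$ and applying the triangle inequality gives
\begin{equation*}
|du|(x) \leq |dP_t^D u|(x) + \frac{1}{2}\int_0^t |dP_s^D(Lu)|(x)\,\vd s.
\end{equation*}

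Next, I would simplify the coefficient $C(s) + 1/(4C(s))$ appearing in Theorem \ref{thm-gradient-Dirichlet}. Since $C(s) \geq \sqrt{2/\pi}$, one has $1/(4C(s)) \leq \tfrac{1}{4}\sqrt{\pi/2}$, and the trivial bound $\min\{2, 1 + \alpha_0\sqrt{s/(2\pi)}\} \leq 2$ forces $C(s) \leq \sqrt{2/\pi} + 2\alpha_0\sqrt{s}$, so that
\begin{equation*}
C(s) + \frac{1}{4C(s)} \leq \sqrt{\tfrac{2}{\pi}} + \tfrac{1}{4}\sqrt{\tfrac{\pi}{2}} + 2\alpha_0 \sqrt{s}.
\end{equation*}
Writing $A := \sqrt{2/\pi} + \tfrac{1}{4}\sqrt{\pi/2}$, Theorem \ref{thm-gradient-Dirichlet} then supplies the clean bound $\|dP_s^D v\|_\infty \leq s^{-1/2} e^{K_Z^- s/2}(A + 2\alpha_0 \sqrt{s})\|v\|_\infty$ for any $v \in \mathcal{B}_b(M)$.

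The final step is to choose $t = \delta^{-2}$ and evaluate each piece. The first term contributes $\delta e^{K_Z^-/(2\delta^2)}(A + 2\alpha_0/\delta)\|u\|_\infty$ directly. For the integral, bounding $e^{K_Z^- s/2} \leq e^{K_Z^-/(2\delta^2)}$ on $[0,\delta^{-2}]$ and computing $\int_0^{\delta^{-2}} (A/\sqrt{s} + 2\alpha_0)\,\vd s = 2A/\delta + 2\alpha_0/\delta^2$ yields a contribution of $e^{K_Z^-/(2\delta^2)}(A/\delta + \alpha_0/\delta^2)\|Lu\|_\infty$. Summing the two and using the modest slack $\alpha_0/\delta^2 \leq 2\alpha_0/\delta^2$, the result repackages into the factored form
\begin{equation*}
e^{K_Z^-/(2\delta^2)}\l(A + \frac{2\alpha_0}{\delta}\r)\l(\delta\|u\|_\infty + \delta^{-1}\|Lu\|_\infty\r)
\end{equation*}
claimed in the theorem.

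The main obstacle is the bookkeeping: one must carry the $\alpha_0 \sqrt{s}$ term from $C(s)$ through the $\vd s$-integration without blowing up near $s = 0$ (relying on the fact that $s^{-1/2}$ is integrable there) and then align the resulting constants with the clean product form of the stated inequality via the slight slack noted above. Every other step is a direct transcription of the proof of Corollary \ref{cor-1}.
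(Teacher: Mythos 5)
Your proposal is correct and follows essentially the same route as the paper: the Duhamel identity $u = P_t^D u - \tfrac{1}{2}\int_0^t P_s^D(Lu)\,\vd s$ obtained via It\^o's formula and the vanishing boundary condition, followed by differentiation, the bound from Theorem \ref{thm-gradient-Dirichlet}, and the substitution $t = \delta^{-2}$. Your additional bookkeeping — the simplification $C(s) + 1/(4C(s)) \leq \sqrt{2/\pi} + \tfrac{1}{4}\sqrt{\pi/2} + 2\alpha_0\sqrt{s}$, the explicit $\vd s$-integration, and the slack $\alpha_0/\delta^2 \leq 2\alpha_0/\delta^2$ needed to reach the factored form — fills in details that the paper's proof states implicitly in its intermediate display.
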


\begin{proof}
By It\^{o}'s formula we have
\begin{equation}\label{eq:expito}
\E\left[ u(X_{t\wedge \tau}(x))\right] = u(x) + \frac{1}{2} \int_0^t \E \left[ 1_{\lbrace s < \tau \rbrace} (Lu)(X_s(x))\right] ds.
\end{equation}
Equation \eqref{eq:expito} can be rearranged as
\begin{equation}\label{eq:expito2}
u(x) = P^D_tu(x) - \frac{1}{2}\int_0^t P^D_s(Lu)(x) \,ds
\end{equation}
and so, by differentiating and applying Theorem \ref{thm-gradient-Dirichlet}, we obtain
\begin{align*}
|du|(x)\leq \e^{K_Z^-t/2}\l(\sqrt{\frac{2}{\pi}}+\frac{1}{4}\sqrt{\frac{\pi}{2}}+2\sqrt{t}\alpha_0\r) \l(\frac{1}{\sqrt{t}}\,\|u\|_{\infty}+\sqrt{t}\,\|Lu\|_{\infty}\r)
\end{align*}
which yields the estimate by setting $t=\delta^{-2}$. \qed
\end{proof}

\subsection{Neumann boundary}

\begin{theorem}\label{c1-est-n}
If there exist $\phi\in \mathcal{D}$ and a constant $K_{\phi}$ such that
\begin{align}
{\rm Ric}^Z+2L \log \phi- 2|\nabla \log \phi|^2 \geq K_{\phi}
\end{align}
then for $u\in C_b^2(M)$ such that $Nu|_{\partial M}=0$, we have
$$\|du\|_{\infty}\leq \sqrt{\frac{2}{\pi}}\exp\left(\frac{K_{\phi}^{-}}{2\delta^2}\right)\|\phi\|_{\infty}\left(\delta \|u\|_{\infty}+\delta^{-1}\|Lu\|_{\infty}\right)$$
for all $\delta >0$.
\end{theorem}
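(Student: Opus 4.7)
The plan is to follow the same template as the proof of Theorem \ref{c1-est-d}, but with the reflecting diffusion $\tilde X_t$ in place of the killed diffusion $X_{t\wedge\tau}$, replacing Theorem \ref{thm-gradient-Dirichlet} by the Neumann gradient bound of Theorem \ref{Gradient-Neumann}. The Neumann boundary condition $Nu|_{\partial M}=0$ plays the role that $u|_{\partial M}=0$ played in the Dirichlet proof: it kills the extra boundary term that arises in the relevant Itô formula.

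First I would apply It\^o's formula to $u(\tilde X_t)$. Because $\tilde X_t$ is the $\frac12 L$-diffusion reflected at $\partial M$, the semimartingale decomposition of $u(\tilde X_t)$ contains a drift $\frac12\int_0^t Lu(\tilde X_s)\,\vd s$ and a boundary term $\int_0^t Nu(\tilde X_s)\,\vd l_s$, where $l_s$ is the boundary local time. The hypothesis $Nu|_{\partial M}=0$ forces this $\vd l_s$-integral to vanish, and after taking expectations I obtain
\begin{equation}
u(x) = P^N_t u(x) - \tfrac{1}{2}\int_0^t P^N_s(Lu)(x)\,\vd s,
\end{equation}
the Neumann analogue of \eqref{eq:expito2}.

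Next I would differentiate this identity and move the derivative under the integral sign, producing
\begin{equation}
|du|(x) \leq |dP^N_t u|(x) + \tfrac12\int_0^t |dP^N_s(Lu)|(x)\,\vd s.
\end{equation}
Theorem \ref{Gradient-Neumann} applies to both terms on the right, giving
\begin{equation}
|du|(x) \leq \sqrt{\tfrac{2}{\pi}}\,\|\phi\|_\infty\!\left[\l(\tfrac{K_\phi}{\e^{K_\phi t}-1}\r)^{1/2}\|u\|_\infty + \tfrac12\int_0^t\l(\tfrac{K_\phi}{\e^{K_\phi s}-1}\r)^{1/2}\vd s\,\|Lu\|_\infty\right].
\end{equation}

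Finally, I would bound the coefficient using $\bigl(\tfrac{K_\phi}{\e^{K_\phi s}-1}\bigr)^{1/2}\leq \e^{K_\phi^- s/2}/\sqrt{s}$ (the elementary inequality underlying Corollary \ref{cor-1}, proved by rewriting $\tfrac{K_\phi s}{\e^{K_\phi s}-1}\leq \e^{K_\phi^- s}$, which holds in all three sign cases by convexity of the exponential). After pulling out $\e^{K_\phi^- t/2}$ from both terms, the bracketed expression becomes $\e^{K_\phi^- t/2}\bigl(t^{-1/2}\|u\|_\infty + \sqrt{t}\,\|Lu\|_\infty\bigr)$. Setting $t=\delta^{-2}$ yields the stated estimate. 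I do not anticipate any real obstacle: the Neumann case is in fact simpler than the Dirichlet case of Theorem \ref{c1-est-d}, since there is no contribution from paths that reach the boundary and no $\alpha_0$-style correction term; the only mild care is in verifying the auxiliary bound on $(K_\phi/(\e^{K_\phi s}-1))^{1/2}$ uniformly in the sign of $K_\phi$.
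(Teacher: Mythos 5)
Your proof is correct and takes essentially the same approach as the paper: after establishing $u = P^N_t u - \frac{1}{2}\int_0^t P^N_s(Lu)\,\vd s$ (which the paper abbreviates as ``differentiating the Kolmogorov equation''), one differentiates in the spatial variable, applies Theorem \ref{Gradient-Neumann} to both terms, uses the elementary bound $(K_\phi/(\e^{K_\phi s}-1))^{1/2}\le \e^{K_\phi^- s/2}/\sqrt{s}$, and sets $t=\delta^{-2}$. Your explicit It\^o derivation, in which the Neumann condition $Nu|_{\partial M}=0$ kills the boundary local-time term, is precisely what justifies the Kolmogorov identity that the paper invokes implicitly.
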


\begin{proof}
Recalling that $P_t^N$ is the Neumann semigroup with respect to the operator $\frac{1}{2}L$, by differentiating the Kolmogorov equation we have
\begin{align*}
|du|(x)\leq |dP_t^Nu|(x)+\frac{1}{2}\int_0^t|dP_s^NLu|(x) ds.
\end{align*}
By Theorem \ref{Gradient-Neumann}, which is a consequence of our recent result proved in \cite{Ch-Tha-Tho:2017b}, we know that
\begin{align*}
|dP^N_tu|(x)\leq \sqrt{\frac{2}{\pi}} \|\phi\|_{\infty} \l(\frac{K_{\phi}}{\e^{K_{\phi}t}-1}\r)^{1/2}\|u\|_{\infty} \leq \sqrt{\frac{2}{\pi}} {\frac{\e^{K_{\phi}^{-}t/2}}{\sqrt{t}}}\|\phi\|_{\infty} \|u\|_{\infty}
\end{align*}
and thus we obtain the result directly as before, in Corollary \ref{cor-1}. \qed
\end{proof}

Note that given such a $K_\phi$, estimates of the type given by Theorem \ref{thm:uniformests} are also available. It suffices to say that the estimates of Theorem \ref{thm:uniformests} and Corollary \ref{cor:minnedests} carry over to the Neumann setting, so long as one replaces the constant $K_Z$ by $K_\phi$ and remembers to include also the factor $\|\phi\|_\infty$. For explicit $\phi$, as explained in the proof of Corollary \ref{cor-est}, there is the following corollary of Theorem \ref{c1-est-n}:

\begin{corollary}\label{cor-c1-est-d}
Under the assumptions of Corollary \ref{cor-est}, for $u\in C_b^2(M)$ with $Nu|_{\partial M}=0$, we have
\begin{align*}
\|du\|_{\infty}&\leq \sqrt{\frac{2}{\pi}}\exp\left(\frac 1{2}\sigma d r_1+\frac 1{2\delta^2}\l(K_Z-2\sigma \delta_{r_0}(Z)-\frac{2\sigma d}{r_1}-2\sigma^2\r)^-\right)\\
&\qquad\times\left(\delta \|u\|_{\infty}+\delta^{-1}\|Lu\|_{\infty}\right)
\end{align*}
for all $\delta >0$, where $r_1$ and $\delta_{r_0}(Z)$ are defined as in Corollary \ref{cor-est}.
\end{corollary}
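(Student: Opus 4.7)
The plan is to apply Theorem \ref{c1-est-n} directly to the specific auxiliary function $\phi \in \mathcal{D}$ constructed inside the proof of Corollary \ref{cor-est}. Since the hypothesis $Nu|_{\partial M}=0$ is precisely what Theorem \ref{c1-est-n} requires, the only task is to verify that Corollary \ref{cor-est} supplies an admissible $\phi$ together with explicit bounds for the two quantities that actually enter the conclusion of Theorem \ref{c1-est-n}, namely $K_\phi^-$ and $\|\phi\|_\infty$.

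First I would quote from the proof of Corollary \ref{cor-est} that the function $\phi$ built from the integrals involving $\ell$ of \eqref{fun-h} lies in $\mathcal{D}$ and satisfies the curvature estimate
$${\rm Ric}^Z + 2L\log \phi - 2|\nabla \log \phi|^2 \geq K_Z - 2\sigma \delta_{r_0}(Z) - \frac{2\sigma d}{r_1} - 2\sigma^2,$$
so that condition \eqref{K-phi} of Theorem \ref{c1-est-n} holds with
$$K_\phi := K_Z - 2\sigma \delta_{r_0}(Z) - \frac{2\sigma d}{r_1} - 2\sigma^2.$$
The same construction also yields the sup-norm bound $\|\phi\|_\infty \leq \exp(\sigma d r_1/2)$. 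Both assertions are verified in \cite{Ch-Tha-Tho:2017b} and \cite[Theorem~1.1]{Wang05}, and are already invoked in the proof of Corollary \ref{cor-est}; nothing new is required.

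Inserting these two bounds into the conclusion of Theorem \ref{c1-est-n} gives
$$\|du\|_\infty \leq \sqrt{\frac{2}{\pi}}\exp\left(\frac{K_\phi^-}{2\delta^2}\right)\exp\left(\frac{\sigma d r_1}{2}\right)\left(\delta\|u\|_\infty + \delta^{-1}\|Lu\|_\infty\right)$$
for every $\delta > 0$, and collapsing the two exponentials into a single one produces exactly the displayed estimate. Thus no genuine obstacle remains: the real analytic content, i.e.\ the Bismut formula with reflection and the conformal change of metric, is already absorbed into Theorem \ref{c1-est-n} and Corollary \ref{cor-est}, and the corollary at hand follows by routine substitution.
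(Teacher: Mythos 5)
Your proposal is correct and is exactly the argument the paper intends: the text preceding Corollary~\ref{cor-c1-est-d} notes it is "a corollary of Theorem~\ref{c1-est-n}" obtained by using the explicit $\phi$ from Corollary~\ref{cor-est}, and substituting the bounds $K_\phi \geq K_Z - 2\sigma\delta_{r_0}(Z) - 2\sigma d/r_1 - 2\sigma^2$ and $\|\phi\|_\infty \leq \e^{\sigma d r_1/2}$ into Theorem~\ref{c1-est-n} yields precisely the stated inequality. No gaps.
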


\subsection{Application: spectral projection operators}\label{ss:spec}
In this subsection, we first suppose that $M$ is a compact Riemannian manifold without boundary, of dimension $d$ as before. Let $0<\lambda_1\leq \lambda_2\leq \ldots$ denote the eigenvalues of $\Delta$ and let $\{e_j(x)\}$ be the associated real orthonormal basis of $L^2(M)$ consisting of eigenfunctions. For $f\in L^2(M)$, set
\begin{align*}
e_j(f)(x):=e_j(x)\int_Mf(y)e_j(y)dy
\end{align*}
and define the unit band spectral projection operators $\chi_{\lambda}$ by
\begin{align*}
\chi_{\lambda}f:=\sum_{\lambda_j\in [\lambda,\lambda+1)}e_j(f).
\end{align*}
The study of $L^p$-estimates for the such spectral projections has a long
history. For example, under the present assumptions, Sogge \cite{SS07,Sogge02,Sogge88} proved that there exists a constant $C>0$ such that
\begin{align}\label{Sogge-esti}
\|\chi_{\lambda}f\|_{p}\leq C \lambda^{\sigma(p)} \|f\|_2,\quad \lambda\geq 1,\ p \geq 2,
\end{align}
where
$$\sigma(p)=\max\l\{\frac{d-1}{2}-\frac{d}{p}, \frac{d-1}{2}\l(\frac{1}{2}-\frac{1}{p}\r)\r\}.$$
In particular, for $p=\infty$ and $\lambda\geq 1$, we have
\begin{align*}
\|\chi_{\lambda}f\|_{\infty}\leq C\lambda^{(d-1)/2}\,\|f\|_2.
\end{align*}
Moreover, from this and the Cauchy-Schwartz inequality, for each point $x\in M$, we find
\begin{align}\label{GE-chi}
|\Delta \chi_{\lambda}f|^2(x)&=\l(\sum_{\lambda_j\in [\lambda, \lambda+1)}(\lambda_j^2e_j(x))\int_Me_j(y)f(y)\,\vd y\r)^2 \notag\\
&\leq C(\lambda+1)^4\lambda^{n-1}\|\chi_{\lambda}f\|^2_2 \notag\\
&\leq C\lambda^{n+3}\|f\|^2_2,\quad \lambda\geq 1.
\end{align}
This leads us to the following theorem:

\begin{theorem}
Suppose $M$ is a compact manifold without boundary. Then there exists a constant $C>0$ such that
\begin{align*}
\|d\chi_{\lambda}f\|_{\infty}\leq C\lambda^{(n+1)/2}\|f\|_{2},\quad \lambda\geq 1.
\end{align*}
\end{theorem}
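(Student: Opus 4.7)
The plan is to apply Corollary \ref{cor-1} directly to $u := \chi_\lambda f$. Since the theorem concerns the Laplacian, we are in the situation $Z=0$, so $L = \Delta$ and $\Ric^Z = \Ric$. Compactness of $M$ guarantees a lower bound $\Ric \geq K_0$ for some constant $K_0$, while $\chi_\lambda f$ is a finite linear combination of smooth eigenfunctions and therefore lies in $C_b^2(M)$. Corollary \ref{cor-1} thus applies and gives, for every $\delta > 0$,
\[
\|d\chi_\lambda f\|_\infty \leq \sqrt{\tfrac{2}{\pi}}\,\exp\!\bigl(\tfrac{K_0^-}{2\delta^2}\bigr)\bigl(\delta\,\|\chi_\lambda f\|_\infty + \delta^{-1}\,\|\Delta\chi_\lambda f\|_\infty\bigr).
\]

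The two sup-norms on the right are already controlled by the material preceding the theorem. Sogge's inequality \eqref{Sogge-esti} with $p=\infty$ yields $\|\chi_\lambda f\|_\infty \leq C\lambda^{(n-1)/2}\|f\|_2$, and \eqref{GE-chi} supplies $\|\Delta \chi_\lambda f\|_\infty \leq C\lambda^{(n+3)/2}\|f\|_2$. Substituting these bounds, the right-hand side of the inequality above becomes
\[
C\,\exp\!\bigl(\tfrac{K_0^-}{2\delta^2}\bigr)\bigl(\delta\,\lambda^{(n-1)/2} + \delta^{-1}\lambda^{(n+3)/2}\bigr)\|f\|_2.
\]

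The last step is to calibrate $\delta$ so that the two summands balance. Solving $\delta\,\lambda^{(n-1)/2} = \delta^{-1}\lambda^{(n+3)/2}$ yields $\delta = \lambda$, at which choice each contribution is of order $\lambda^{(n+1)/2}$. Because $\lambda \geq 1$, the exponential factor $\exp(K_0^-/(2\lambda^2))$ is bounded above by the absolute constant $\exp(K_0^-/2)$, which may be absorbed into $C$. This gives $\|d\chi_\lambda f\|_\infty \leq C\lambda^{(n+1)/2}\|f\|_2$, as claimed. There is no real obstacle here, since the full analytic machinery has already been developed in Section \ref{sec:quant}; the only point of substance is the observation that the free parameter $\delta$ in Corollary \ref{cor-1} can be tuned against $\lambda$ so as to match the two competing powers of $\lambda$ carried by the $\|u\|_\infty$ and $\|Lu\|_\infty$ terms.
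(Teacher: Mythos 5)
Your proof is correct and follows essentially the same route as the paper: apply Corollary \ref{cor-1} with $Z=0$, take $\delta=\lambda$ so that the two terms balance at order $\lambda^{(n+1)/2}$, and absorb the bounded exponential factor (since $\lambda\geq 1$) into the constant. The only thing you add is the explicit remark that $\chi_\lambda f\in C^2_b(M)$ and that compactness supplies the Ricci lower bound, both of which the paper leaves implicit.
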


\begin{proof}
By Corollary \ref{cor-1}, we know that if $\Ric^Z\geq K_Z$ for some constant $K_Z$ then for $\lambda>1$, we have
\begin{align*}
\|d\chi_{\lambda}f\|_{\infty}\leq \sqrt{\frac{2}{\pi}}\e^{K_Z^-}\l(\lambda\|\chi_{\lambda}f\|_{\infty}+\lambda^{-1} \|\Delta\chi_{\lambda}f\|_{\infty}\r).
\end{align*}
Combining this with \eqref{Sogge-esti} and \eqref{GE-chi} completes the proof. \qed
\end{proof}

Now suppose that $M$ is a compact Riemannian manifold with boundary. Let $0<\lambda^D_1\leq \lambda^D_2\leq \ldots$ denote the corresponding Dirichlet eigenvalues with respect to $\Delta$. Let $\{e_j^D(x)\}$ be the associated orthonormal basis of eigenfunctions in $L^2(M)$. For $f\in L^2(M)$, define
\begin{align*}
e_j^D(f)(x)=e_j^D(x)\int_M f(y)e_j^D(y)dy
\end{align*}
and define the unit band spectral projection operator
\begin{align*}
\chi_{\lambda}^D(f)=\sum_{\lambda_j\in [\lambda,\lambda+1)}e^D_j(f).
\end{align*}
Let $0<\lambda_1^N\leq \lambda_2^N\leq \ldots$ denote the corresponding Neumann eigenvalues with respect to $\Delta$, and define the objects $\{e_j^N(x)\}$, $e_j^N(f)$ and $\chi_{\lambda}^N(f)$ analogously.

\begin{theorem}
Suppose $M$ is a compact manifold with boundary. Then there exist constants $C(D)$ and $C(N)$ such that
\begin{align*}
&\|d \chi^D_{\lambda}(f)\|_{\infty}\leq C(D)\lambda^{(n+1)/2}\|f\|_{2}, \quad \|d \chi^N_{\lambda}(f)\|_{\infty}\leq C(N)\lambda^{(n+1)/2}\|f\|_{2}
\end{align*}
for $\lambda \geq 1$.
\end{theorem}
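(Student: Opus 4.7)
The plan is to mimic the proof of the preceding (boundaryless) theorem, with Theorems \ref{c1-est-d} and \ref{c1-est-n} playing the role of Corollary \ref{cor-1}. As a first step, observe that $\chi^D_\lambda(f) = \sum_{\lambda_j^D\in[\lambda,\lambda+1)} e_j^D(f)$ is a finite linear combination of Dirichlet eigenfunctions, hence smooth and vanishing on $\partial M$; similarly $\chi^N_\lambda(f)$ is a finite linear combination of Neumann eigenfunctions, so that $N\chi^N_\lambda(f)|_{\partial M}=0$. Both functions thus lie in $C_b^2(M)$ by compactness of $M$, and the boundary hypotheses of Theorems \ref{c1-est-d} and \ref{c1-est-n} respectively are satisfied with $L=\Delta$ (i.e.\ $Z=0$).

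Since $M$ is compact, all the geometric quantities appearing in the hypotheses of these theorems are finite: $K_0=\inf\Ric$ and the mean curvature lower bound $\theta$ for the Dirichlet case, and for the Neumann case a function $\phi\in\mathcal{D}$ with finite $K_\phi$ built from bounds on the second fundamental form and sectional curvature as in Corollary \ref{cor-est}. Applying Theorem \ref{c1-est-d} to $u=\chi^D_\lambda(f)$ and Theorem \ref{c1-est-n} to $u=\chi^N_\lambda(f)$ with the choice $\delta=\lambda$, and noting that for $\lambda\geq 1$ the prefactor $\exp(K^-/(2\lambda^2))$ is bounded by $\exp(K^-/2)$ while the bracketed coefficient in Theorem \ref{c1-est-d} is bounded by $\sqrt{2/\pi}+\tfrac14\sqrt{\pi/2}+2\alpha_0$, one obtains estimates of the form
\begin{align*}
\|d\chi^{D/N}_\lambda(f)\|_\infty \leq C_1\bigl(\lambda\|\chi^{D/N}_\lambda(f)\|_\infty + \lambda^{-1}\|\Delta \chi^{D/N}_\lambda(f)\|_\infty\bigr),
\end{align*}
where $C_1$ depends only on the geometry of $M$.

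The proof is then completed by two further ingredients, parallel to the boundaryless case. First, $\Delta\chi^{D/N}_\lambda(f) = -\sum_{\lambda_j^{D/N}\in[\lambda,\lambda+1)}\lambda_j^{D/N}\, e_j^{D/N}(f)$ is again a spectral cluster supported on frequencies below $\lambda+1$, and a direct pointwise estimate as in \eqref{GE-chi} gives $\|\Delta \chi^{D/N}_\lambda(f)\|_\infty \leq (\lambda+1)^2\|\chi^{D/N}_\lambda(f)\|_\infty$. Second, the Sogge-type $L^\infty$ inequality $\|\chi^{D/N}_\lambda(f)\|_\infty \leq C_2\,\lambda^{(n-1)/2}\|f\|_2$ for $\lambda\geq 1$, in both the Dirichlet and Neumann settings on compact manifolds with boundary, is available from the existing literature (see \cite{Xu09,Xu11,SS07}). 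Combining these two inputs with the previous display yields $\|d\chi^{D/N}_\lambda(f)\|_\infty \leq C\lambda^{(n+1)/2}\|f\|_2$, as required.

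The only genuine obstacle is the boundary-case Sogge bound on $\|\chi^{D/N}_\lambda(f)\|_\infty$, which is not established in the present paper and must be quoted as a black box; everything else is a direct specialization of Theorems \ref{c1-est-d} and \ref{c1-est-n} to $L=\Delta$ with the choice $\delta=\lambda$, exactly as in the preceding boundaryless theorem.
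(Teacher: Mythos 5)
Your overall strategy matches the paper's: apply Theorems \ref{c1-est-d} and \ref{c1-est-n} (the paper uses Corollary \ref{cor-c1-est-d}, the explicit-$\phi$ form of Theorem \ref{c1-est-n}) with $\delta=\lambda$, and feed in the Sogge-type $L^\infty$ bounds $\|\chi_\lambda^{D/N}f\|_\infty\leq C\lambda^{(n-1)/2}\|f\|_2$, which both you and the paper quote from \cite{Sogge88,Xu09,Xu11,SS07}. The one place you diverge, and where the argument as written does not hold up, is your treatment of $\|\Delta\chi_\lambda^{D/N}f\|_\infty$. You claim that ``a direct pointwise estimate as in \eqref{GE-chi} gives $\|\Delta\chi_\lambda(f)\|_\infty\leq(\lambda+1)^2\|\chi_\lambda(f)\|_\infty$.'' This is not a direct pointwise estimate: writing $\chi_\lambda f=\sum_j c_j e_j$, the cancellations among the terms $c_j e_j(x)$ and among the reweighted terms $\lambda_j^2 c_j e_j(x)$ can be quite different, so $|\sum_j\lambda_j^2 c_j e_j(x)|\not\leq(\lambda+1)^2|\sum_j c_j e_j(x)|$ in general. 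Such a sup-norm comparison is a Bernstein-type inequality for spectral clusters, which is a nontrivial result (and on manifolds with boundary not something one gets for free), and in any case it is not what \eqref{GE-chi} asserts.

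What \eqref{GE-chi} actually does, and what the paper's proof does via \eqref{GE-chi-ND}, is to bound $\|\Delta\chi_\lambda f\|_\infty$ directly against $\|f\|_2$, not against $\|\chi_\lambda f\|_\infty$: by Cauchy--Schwarz,
\begin{equation}
|\Delta\chi_\lambda f|(x)=\Bigl|\sum_j\lambda_j^2 c_j e_j(x)\Bigr|\leq\Bigl(\sum_j\lambda_j^4 e_j(x)^2\Bigr)^{1/2}\Bigl(\sum_j c_j^2\Bigr)^{1/2}\leq(\lambda+1)^2\Bigl(\sum_j e_j(x)^2\Bigr)^{1/2}\|f\|_2,
\end{equation}
and then the local Weyl bound $\sum_{\lambda_j\in[\lambda,\lambda+1)}e_j(x)^2\leq C\lambda^{n-1}$, which is equivalent to the $L^2\to L^\infty$ Sogge bound you already invoke, gives $\|\Delta\chi_\lambda f\|_\infty\leq C\lambda^{(n+3)/2}\|f\|_2$ for $\lambda\geq1$. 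Your downstream conclusion is therefore correct, but the intermediate inequality you assert is false as stated; replace it by the Cauchy--Schwarz argument above (which is exactly the paper's \eqref{GE-chi}--\eqref{GE-chi-ND}) and the proof is sound and coincides with the paper's.
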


\begin{proof}
By \cite{Sogge88} and \cite{Xu09}, we know that
\begin{align}\label{Est-chi-ND}
\|\chi_{\lambda} ^Df\|_{\infty}\leq C\lambda^{(n-1)/2}\|f\|_2,\quad  \|\chi_{\lambda}^N f\|_{\infty}\leq C\lambda^{(n-1)/2}\|f\|_2,\quad  \lambda \geq 1.
\end{align}
By the same argument as in \eqref{GE-chi}, we know that
\begin{align}\label{GE-chi-ND}
|\Delta \chi^D_{\lambda}f|^2(x)
&\leq C\lambda^{n+3}\|f\|^2_2, \quad |\Delta \chi^N_{\lambda}f|^2(x)\leq C\lambda^{n+3}\|f\|^2_2,\quad \lambda\geq 1.
\end{align}
Using Theorem \ref{c1-est-d}, with $K_V, K_0$ and $\theta$ constants chosen such that $\Ric^Z\geq K_Z$, $\Ric\geq K_0,$ and $H_{\partial M}\geq \theta$, for $\lambda\geq 1$, we have
\begin{align*}
&\|d \chi_{\lambda}^D(f)\|_{\infty}\\[2mm]
&\ \leq  \sqrt{\frac{2}{\pi}}\e^{K_Z^{-}/2} \l(\sqrt{\frac{2}{\pi}}+\frac{1}{4}\sqrt{\frac{\pi}{2}}+\frac{2\alpha_0}{\lambda}\r) \left(\lambda \|\chi_{\lambda}^D(f)\|_{\infty}+\lambda^{-1}\|\Delta \chi_{\lambda}^D(f)\|_{\infty}\right)
\end{align*}
where $\alpha_0$ is defined by \eqref{eq:alpha0defn}. Similarly, letting $\sigma, K_Z, r_0, r_1, \delta_{r_0}(Z)$ and $\theta$ be the constants as in Corollary \ref{cor-c1-est-d}, for $\lambda\geq 1$, we have
\begin{align*}
\|d \chi_{\lambda}^N(f)\|_{\infty}&\leq \sqrt{\frac{2}{\pi}}\exp\left({\frac 12\sigma d r_1+\frac 12\l(K_Z-2\sigma \delta_{r_0}(Z)-\frac{2\sigma d}{r_1}-2\sigma^2\r)^-}\right)\\
&\qquad\times\left(\lambda \|u\|_{\infty}+\lambda^{-1}\|Lu\|_{\infty}\right).
\end{align*}
Combining this with \eqref{Est-chi-ND} and \eqref{GE-chi-ND}, we complete the proof. \qed
\end{proof}

\begin{acknowledgements}
This work has been supported by the Fonds National de la Recherche Luxembourg (FNR) under the OPEN scheme (project GEOMREV O14/7628746). The first named author acknowledges support by NSFC (Grant No.~11501508) and Zhejiang Provincial Natural Science Foundation of China (Grant No. LQ16A010009).
\end{acknowledgements}

\end{document}